\documentclass[10pt]{article}

\usepackage{color}
\usepackage{amsmath}
\usepackage{graphicx}
\usepackage{footnote}
\usepackage{subscript}
\usepackage{amsfonts,color}
\usepackage{amsmath,amssymb}
\usepackage{epsfig}
\usepackage{amssymb} 
\usepackage{mathtools}
\usepackage{amsthm,wasysym}
\usepackage{framed}

\usepackage[english]{babel}
\usepackage[utf8]{inputenc}
\makeatletter
\usepackage{float}
\usepackage{wrapfig}
\restylefloat{figure}

\makeatletter
\let\@fnsymbol\@arabic
\makeatother

\usepackage{bbm}
\newcommand{\id}{{\boldsymbol{\mathbbm{1}}}}

\newcommand{\tr}{{\rm tr}}
\newcommand{\dev}{{\rm dev}}
\newcommand{\sym}{{\rm sym}}
\newcommand{\skw}{{\rm skew}}

\setcounter{tocdepth}{2} 

\def\dd{\displaystyle}

\setlength{\textheight}{23cm}
\setlength{\textwidth}{17cm}
\setlength{\topmargin}{-1cm}
\setlength{\oddsidemargin}{-0cm}
\setlength{\evensidemargin}{-1cm}

\setlength\arraycolsep{2pt}

\newtheorem{theorem}{Theorem}[section]

\newtheorem{remark}[theorem]{Remark}
\newtheorem{proposition}[theorem]{Proposition}

\def\barr{\begin{array}}
\setcounter{secnumdepth}{4}
\setcounter{tocdepth}{4}
\usepackage{lscape}

	\def\earr{\end{array}}
\def\bec#1{\begin{equation}\label{#1}}
\def\becn{\begin{equation*}}
\def\endec{\end{equation}}
\def\endecn{\end{equation*}}
\def\dd{\displaystyle}
\def\bfm#1{\mbox{\boldmat}}
\begin{document}
	
	\title{A linear isotropic  Cosserat  shell model  including terms up to  $O(h^5)$. Existence and uniqueness }
\author{ 
	 Ionel-Dumitrel Ghiba\thanks{Corresponding author:  Ionel-Dumitrel Ghiba,  \ Department of Mathematics,  Alexandru Ioan Cuza University of Ia\c si,  Blvd.
	 	Carol I, no. 11, 700506 Ia\c si,
	 	Romania; and  Octav Mayer Institute of Mathematics of the
	 	Romanian Academy, Ia\c si Branch,  700505 Ia\c si, email:  dumitrel.ghiba@uaic.ro}  \qquad and \qquad  Mircea B\^irsan\thanks{Mircea B\^irsan, \ \  Lehrstuhl f\"{u}r Nichtlineare Analysis und Modellierung, Fakult\"{a}t f\"{u}r Mathematik,
 	Universit\"{a}t Duisburg-Essen, Thea-Leymann Str. 9, 45127 Essen, Germany; and Department of Mathematics,  Alexandru Ioan Cuza University of Ia\c si,  Blvd.
 Carol I, no. 11, 700506 Ia\c si,
Romania;  email: mircea.birsan@uni-due.de} \qquad and \qquad   Patrizio Neff\,\thanks{Patrizio Neff,  \ \ Head of Lehrstuhl f\"{u}r Nichtlineare Analysis und Modellierung, Fakult\"{a}t f\"{u}r
		Mathematik, Universit\"{a}t Duisburg-Essen,  Thea-Leymann Str. 9, 45127 Essen, Germany, email: patrizio.neff@uni-due.de} 
}
\date{}
\maketitle
\begin{abstract}

In this paper we derive the linear elastic Cosserat shell model incorporating effects up to order $O(h^5)$ in the  shell thickness $h$ as a particular case of the  recently introduced  geometrically nonlinear elastic Cosserat shell model. The existence and uniqueness of the solution is  proven in  suitable admissible sets. To this end, inequalities of Korn-type for shells are established which allow to show coercivity in the Lax-Milgram theorem. We are also showing an existence and uniqueness result for a truncated $O(h^3)$ model. Main issue is the suitable treatment of the curved reference configuration of the shell. Some connections to the classical Koiter membrane-bending model are highlighted.
  \medskip
  
  \noindent\textbf{Keywords:}
   Cosserat shell, micropolar shell, 6-parameter resultant shell, in-plane drill
  rotations,    isotropy, existence of minimisers, linear theories
\end{abstract}

\tableofcontents

\section{Introduction}\setcounter{equation}{0}
In this paper we consider  the linearised formulation of the geometrically nonlinear Cosserat shell model including terms up to order $O(h^5)$ in the shell-thickness $h$ proposed previously in \cite{GhibaNeffPartI}. The Cosserat approach to shell theory (also called micropolar shell theory) was initiated by the Cosserat brothers, who were the first to elaborate a rigorous study on directed media \cite{Cosserat08,Cosserat09}. The  idea is to model a shell-like body as a deformable two-dimensional surface endowed with directors assigned to every point. In this respect, there are several approaches in  literature. 
For instance, Green and Naghdi \cite{Naghdi72} have elaborated a shell theory using two-dimensional surfaces endowed with a single deformable director also called \textit{Cosserat surfaces}. The theory of Cosserat surfaces has been presented in the monograph \cite{Rubin00} and the linearised theory has been investigated in a number of papers \cite{Davini75,Birsan-IJES-07,Birsan08,Birsan-EJM/S-09}.

Another direct approach to shell theory, also called the theory of \textit{simple shells} (or \textit{directed surfaces}), describes the shell-like body as a deformable surface endowed with an independent triad of orthonormal vectors connected to each point of the surface. The triad of directors characterizes the \textit{orientation} of material points and introduces thus the \textit{microrotation tensor}. 
The theory of simple shells has been presented by Zhilin and Altenbach in \cite{Zhilin76,Altenbach-Zhilin-88,Altenbach04,Zhilin06} and a mathematical study of the linearised equations for this model is included  in the papers \cite{Birsan-Alten-MMAS-10,Birsan-Alten-ZAMM-11}.

In  \cite{GhibaNeffPartI} we have established a novel geometrically nonlinear Cosserat shell model including terms up to order $O(h^5)$ in the shell-thickness $h$. The dimensional descent was obtained starting with a 3D-parent Cosserat model and assuming an appropriate 8-parameter ansatz for the shell-deformation through the thickness. This is the derivation approach and it has allowed us to arrive at specific novel strain and curvature measures.    In this way, we obtained a kinematical model which is equivalent to the kinematical model of 6-parameter shells. Nevertheless, the theory of 6-parameter shells was developed for shell-like bodies made of Cauchy materials, see the monographs \cite{Libai98,Pietraszkiewicz-book04} or the papers \cite{Eremeyev06,Pietraszkiewicz11}, but our model is expressed in terms of the accepted measures for the bending and the change of curvature (cf. the assertion of Acharya \cite{acharya2000nonlinear} and   Anicic and Leg\'{e}r \cite{anicic1999formulation}, respectively, see also  \cite{vsilhavycurvature}). For recent linear Naghdi shell models \cite{Naghdi69} which do not incorporate dedicated Cosserat effects we refer to \cite{Tambaca-14,Tambaca-16}, while for a two-dimensional model of elastic shell-like body derived from the three-dimensional linearized micropolar elasticity by using the asymptotic expansion method we refer to  \cite{Aganovic07}.

For all our proposed new linearised models we show  existence and uniqueness. Existence results for the linearised equations of 6-parameter shells have been proved in \cite{EremeyevLebedev11}.
We refer to the review paper \cite{Altenbach-Erem-Review} for a detailed presentation of various approaches and developments concerning Cosserat-type shell theories and to the books \cite{Ciarlet98b,Ciarlet00,Ciarlet2Diff-Geo2005} for shell theories in the classical linear elasticity framework.

In the linearised version of our Cosserat shell model we obtain the same linear strain measures as in the theory of 6-parameter shells, due to the fact that the kinematical structure is equivalent, since there is  an explicit dependence of the internal energy density on the change of curvature measure and on the bending measure.

 \section{The   geometrically nonlinear  unconstrained  Cosserat shell model including terms up to $O(h^5)$}\setcounter{equation}{0}\label{Intro}

\subsection{Notations}
 
   In this paper, 
 for $a,b\in\mathbb{R}^n$ we let $\bigl\langle {a},{b} \bigr\rangle _{\mathbb{R}^n}$  denote the scalar product on $\mathbb{R}^n$ with
 associated vector norm $\lVert a\rVert _{\mathbb{R}^n}^2=\bigl\langle {a},{a} \bigr\rangle _{\mathbb{R}^n}$. 
 The standard Euclidean scalar product on  the set of real $n\times  {m}$ second order tensors $\mathbb{R}^{n\times  {m}}$ is given by
 $\bigl\langle  {X},{Y} \bigr\rangle _{\mathbb{R}^{n\times  {m}}}={\rm tr}(X\, Y^T)$, and thus the  {(squared)} Frobenius tensor norm is
 $\lVert {X}\rVert ^2_{\mathbb{R}^{n\times  {m}}}=\bigl\langle  {X},{X} \bigr\rangle _{\mathbb{R}^{n\times  {m}}}$. The identity tensor on $\mathbb{R}^{n \times n}$ will be denoted by $\id_n$, so that
 ${\rm tr}({X})=\bigl\langle {X},{\id}_n \bigr\rangle $, and the zero matrix is denoted by $0_n$. We let ${\rm Sym}(n)$ and ${\rm Sym}^+(n)$ denote the symmetric and positive definite symmetric tensors, respectively.  We adopt the usual abbreviations of Lie-group theory, i.e.,
 ${\rm GL}(n)=\{X\in\mathbb{R}^{n\times n}\;|\det({X})\neq 0\}$ the general linear group ${\rm SO}(n)=\{X\in {\rm GL}(n)| X^TX=\id_n,\det({X})=1\}$ with
 corresponding Lie-algebras $\mathfrak{so}(n)=\{X\in\mathbb{R}^{n\times n}\;|X^T=-X\}$ of skew symmetric tensors
 and $\mathfrak{sl}(n)=\{X\in\mathbb{R}^{n\times n}\;| \,\tr({X})=0\}$ of traceless tensors. For all $X\in\mathbb{R}^{n\times n}$ we set ${\rm sym}\, X\,=\frac{1}{2}(X^T+X)\in{\rm Sym}(n)$, $\skw\,X\,=\frac{1}{2}(X-X^T)\in \mathfrak{so}(n)$ and the deviatoric part $\dev \,X\,=X-\frac{1}{n}\;\,\tr(X)\cdot\id_n\in \mathfrak{sl}(n)$  and we have
 the orthogonal Cartan-decomposition  of the Lie-algebra
 $
 \mathfrak{gl}(n)=\{\mathfrak{sl}(n)\cap {\rm Sym}(n)\}\oplus\mathfrak{so}(n) \oplus\mathbb{R}\!\cdot\! \id_n,$ $
 X=\dev\, \sym \,X\,+ \skw\,X\,+\frac{1}{n}\,\tr(X)\!\cdot\! \id_n\,.
 $ For vectors $\xi,\eta\in\mathbb{R}^n$, we have the tensor product
 $(\xi\otimes\eta)_{ij}=\xi_i\,\eta_j$. A matrix having the  three  column vectors $A_1,A_2, A_3$ will be written as 
 $
 (A_1\,|\, A_2\,|\,A_3).
 $ 
 For a given matrix $M\in \mathbb{R}^{2\times 2}$ we define the lifted quantities $
 M^\flat =\begin{footnotesize}\begin{pmatrix}
 M_{11}& M_{12}&0 \\
 M_{21}&M_{22}&0 \\
 0&0&0
 \end{pmatrix}\end{footnotesize}
 \in \mathbb{R}^{3\times 3}$ and $
 \widehat{M} =\begin{footnotesize}\begin{pmatrix}
 M_{11}& M_{12}&0 \\
 M_{21}&M_{22}&0 \\
 0&0&1
 \end{pmatrix}\end{footnotesize}
 \in \mathbb{R}^{3\times 3}$. We make use of the operator $\mathrm{axl}: \mathfrak{so}(3)\to\mathbb{R}^3$ associating with a matrix $A\in \mathfrak{so}(3)$ the vector $\mathrm{axl}({A}):=(-A_{23},A_{13},-A_{12})^T$. The inverse operator will be denoted by ${\rm Anti}: \mathbb{R}^3\to \mathfrak{so}(3)$.
 
 For  an open domain  $\Omega\subseteq\mathbb{R}^{3}$,
 the usual Lebesgue spaces of square integrable functions, vector or tensor fields on $\Omega$ with values in $\mathbb{R}$, $\mathbb{R}^3$, $\mathbb{R}^{3\times 3}$ or ${\rm SO}(3)$, respectively will be denoted by ${\rm L}^2(\Omega;\mathbb{R})$, ${\rm L}^2(\Omega;\mathbb{R}^3)$, ${\rm L}^2(\Omega; \mathbb{R}^{3\times 3})$ and ${\rm L}^2(\Omega; {\rm SO}(3))$, respectively. Moreover, we use the standard Sobolev spaces ${\rm H}^{1}(\Omega; \mathbb{R})$ \cite{Adams75,Raviart79,Leis86}
 of functions $u$.  For vector fields $u=\left(    u_1, u_2, u_3\right)^T$ with  $u_i\in {\rm H}^{1}(\Omega)$, $i=1,2,3$,
 we define
 $
 \nabla \,u:=\left(
 \nabla\,  u_1\,|\,
 \nabla\, u_2\,|\,
 \nabla\, u_3
 \right)^T.
 $
 The corresponding Sobolev-space will be denoted by
 $
 {\rm H}^1(\Omega; \mathbb{R}^{3})$. A tensor $Q:\Omega\to {\rm SO}(3)$ having the components in ${\rm H}^1(\Omega; \mathbb{R})$ belongs to ${\rm H}^1(\Omega; {\rm SO}(3))$. In writing the norm in the corresponding  Sobolev-space we will specify the space. The space will be omitted only when the Frobenius norm or scalar product is considered. 
 
 \subsection{Shell-kinematics}

 Let $\Omega_\xi\subset\mathbb{R}^3$ be a three-dimensional curved {\it shell-like thin domain}. Here, the domains $\Omega_h $ and $ \Omega_\xi $ are referred to a  fixed right Cartesian coordinate frame with unit vectors $
 \boldsymbol e_i$ along the axes $Ox_i$. A generic point of $\Omega_\xi$ will be denoted by $(\xi_1,\xi_2,\xi_3)$. The elastic material constituting the shell is assumed to be homogeneous and isotropic and the reference configuration $\Omega_\xi$ is assumed to be a natural state. 
 The deformation of the body occupying the domain $\Omega_\xi$ is described by a vector map $\varphi_\xi:\Omega_\xi\subset\mathbb{R}^3\rightarrow\mathbb{R}^3$ (\textit{called deformation}) and by a \textit{microrotation}  tensor
 $
 \overline{R}_\xi:\Omega_\xi\subset\mathbb{R}^3\rightarrow {\rm SO}(3)\, 
 $ attached at each point. 
 We denote the current configuration (deformed configuration) by $\Omega_c:=\varphi_\xi(\Omega_\xi)\subset\mathbb{R}^3$. We consider  the \textit{fictitious Cartesian (planar) configuration} of the body $\Omega_h $. This parameter domain $\Omega_h\subset\mathbb{R}^3$ is a right cylinder of the form
 $$\Omega_h=\left\{ (x_1,x_2,x_3) \,\Big|\,\, (x_1,x_2)\in\omega, \,\,\,-\dfrac{h}{2}\,< x_3<\, \dfrac{h}{2}\, \right\} =\,\,\dd\,\omega\,\times\left(-\frac{h}{2},\,\frac{h}{2}\right),$$
 where  $\omega\subset\mathbb{R}^2$ is a bounded domain with Lipschitz boundary
 $\partial \omega$ and the constant length $h>0$ is the \textit{thickness of the shell}.
 For shell--like bodies we consider   the  domain $\Omega_h $ to be {thin}, i.e. the thickness $h$ is {small}. 
 
 The diffeomorphism $\Theta:\mathbb{R}^3\rightarrow\mathbb{R}^3$ describing the reference configuration (i.e., the curved surface of the shell),  will be chosen in the specific form
 \begin{equation}\label{defTheta}
 \Theta(x_1,x_2,x_3)\,=\,y_0(x_1,x_2)+x_3\ n_0(x_1,x_2), \ \ \ \ \ \qquad  n_0\,=\,\dd\frac{\partial_{x_1}y_0\times \partial_{x_2}y_0}{\lVert \partial_{x_1}y_0\times \partial_{x_2}y_0\rVert}\, ,
 \end{equation}
 where $y_0:\omega\to \mathbb{R}^3$ is a function of class $C^2(\omega)$. If not otherwise indicated, by $\nabla\Theta$ we denote
 $\nabla\Theta(x_1,x_2,0)$.

 Now, let us  define the map
 $
 \varphi:\Omega_h\rightarrow \Omega_c,\  \varphi(x_1,x_2,x_3)=\varphi_\xi( \Theta(x_1,x_2,x_3)).
 $
 We view $\varphi$ as a function which maps the fictitious  planar reference configuration $\Omega_h$ into the deformed configuration $\Omega_c$.
 We also consider the \textit{elastic microrotation}
 $
 \overline{Q}_{e,s}:\Omega_h\rightarrow{\rm SO}(3),\  \overline{Q}_{e,s}(x_1,x_2,x_3):=\overline{R}_\xi(\Theta(x_1,x_2,x_3))\,.
 $

 The dimensional descent in \cite{GhibaNeffPartI} is done  by   assuming that  the elastic microrotation is constant through the thickness, i.e.
 $
 \overline{Q}_{e,s}(x_1,x_2,x_3)=\overline{Q}_{e,s}(x_1,x_2), 
 $
 and  by considering an \textit{8-parameter quadratic ansatz} in the thickness direction for the reconstructed total deformation $\varphi_s:\Omega_h\subset \mathbb{R}^3\rightarrow \mathbb{R}^3$ of the shell-like body, i.e.,
 \begin{align}\label{ansatz}
 \varphi_s(x_1,x_2,x_3)\,=\,&m(x_1,x_2)+\bigg(x_3\varrho_m(x_1,x_2)+\dd\frac{x_3^2}{2}\varrho_b(x_1,x_2)\bigg)\overline{Q}_{e,s}(x_1,x_2)\nabla\Theta.e_3\, .
 \end{align}
Here $m:\omega\subset\mathbb{R}^2\to\mathbb{R}^3$ represents the
 deformation of the total midsurface,  $\varrho_m,\,\varrho_b:\omega\subset\mathbb{R}^2\to \mathbb{R}$ allow in principal for symmetric thickness stretch  ($\varrho_m\neq1$) and asymmetric thickness stretch ($\varrho_b\neq 0$) about the midsurface and which are given by
 \begin{align}\label{final_rho}
 \varrho_m\,=\,&1-\frac{\lambda}{\lambda+2\,\mu}[\bigl\langle  \overline{Q}_{e,s}^T(\nabla m|0)[\nabla\Theta \,]^{-1},\id_3 \bigr\rangle -2]\;, \\
 \dd\varrho_b\,=\,&-\frac{\lambda}{\lambda+2\,\mu}\bigl\langle  \overline{Q}_{e,s}^T(\nabla (\,\overline{Q}_{e,s}\nabla\Theta \,.e_3)|0)[\nabla\Theta \,]^{-1},\id_3 \bigr\rangle   +\frac{\lambda}{\lambda+2\,\mu}\bigl\langle  \overline{Q}_{e,s}^T(\nabla m|0)[\nabla\Theta \,]^{-1}(\nabla n_0|0)[\nabla\Theta \,]^{-1},\id_3 \bigr\rangle .\notag
 \end{align}
 This allowed us to  obtained a fully two-dimensional minimization problem in which the reduced energy density is expressed in terms of the  following tensor fields (the same strain measures are also considered in \cite{Libai98,Pietraszkiewicz-book04,Eremeyev06,NeffBirsan13,Birsan-Neff-L54-2014} but with other motivations of their significance) on the surface $\omega\,$  
 \begin{align}\label{e55}
 \mathcal{E}_{m,s} & :\,=\,    \overline{Q}_{e,s}^T  (\nabla  m|\overline{Q}_{e,s}\nabla\Theta \,.e_3)[\nabla\Theta \,]^{-1}-\id_3\not\in {\rm Sym}(3),\qquad \quad \qquad\ \ \ \ \,  \text{{\it elastic shell strain tensor}} ,  \\
 \mathcal{K}_{e,s} & :\,=\,  \Big(\mathrm{axl}(\overline{Q}_{e,s}^T\,\partial_{x_1} \overline{Q}_{e,s})\,|\, \mathrm{axl}(\overline{Q}_{e,s}^T\,\partial_{x_2} \overline{Q}_{e,s})\,|0\Big)[\nabla\Theta \,]^{-1}\not\in {\rm Sym}(3) \quad \text{\it\  elastic shell bending--curvature tensor}.\notag
 \end{align}

\subsection{Geometrically nonlinear energy functional}

 In   \cite{GhibaNeffPartI} we have obtained the following two-dimensional minimization problem   for the
 deformation of the midsurface $m:\omega
 \,{\to}\,
 \mathbb{R}^3$ and the microrotation of the shell
 $\overline{Q}_{e,s}:\omega
 \,{\to}\,
 \textrm{SO}(3)$ solving on $\omega
 \,\subset\mathbb{R}^2
 $: minimize with respect to $ (m,\overline{Q}_{e,s}) $ the  functional
 \begin{equation}\label{e89}
 I(m,\overline{Q}_{e,s})\!=\!\! \int_{\omega}   \!\!\Big[  \,
 W_{\mathrm{memb}}\big(  \mathcal{E}_{m,s}  \big) +W_{\mathrm{memb,bend}}\big(  \mathcal{E}_{m,s} ,\,  \mathcal{K}_{e,s} \big)   +
 W_{\mathrm{bend,curv}}\big(  \mathcal{K}_{e,s}    \big)
 \Big] \,\underbrace{{\rm det}(\nabla y_0|n_0)}_{{\rm det}\nabla\Theta \,}       \, d a - \overline{\Pi}(m,\overline{Q}_{e,s})\,,
 \end{equation}
 where the  membrane part $\,W_{\mathrm{memb}}\big(  \mathcal{E}_{m,s} \big) \,$, the membrane--bending part $\,W_{\mathrm{memb,bend}}\big(  \mathcal{E}_{m,s} ,\,  \mathcal{K}_{e,s} \big) \,$ and the bending--curvature part $\,W_{\mathrm{bend,curv}}\big(  \mathcal{K}_{e,s}    \big)\,$ of the shell energy density are given by
 \begin{align}\label{e90}
 W_{\mathrm{memb}}\big(  \mathcal{E}_{m,s} \big)=& \, \Big(h+{\rm K}\,\dfrac{h^3}{12}\Big)\,
 W_{\mathrm{shell}}\big(    \mathcal{E}_{m,s} \big),\vspace{2.5mm}\notag\\    
 W_{\mathrm{memb,bend}}\big(  \mathcal{E}_{m,s} ,\,  \mathcal{K}_{e,s} \big)=& \,   \Big(\dfrac{h^3}{12}\,-{\rm K}\,\dfrac{h^5}{80}\Big)\,
 W_{\mathrm{shell}}  \big(   \mathcal{E}_{m,s} \, {\rm B}_{y_0} +   {\rm C}_{y_0} \mathcal{K}_{e,s} \big)  \\&
 -\dfrac{h^3}{3} \mathrm{ H}\,\mathcal{W}_{\mathrm{shell}}  \big(  \mathcal{E}_{m,s} ,
 \mathcal{E}_{m,s}{\rm B}_{y_0}+{\rm C}_{y_0}\, \mathcal{K}_{e,s} \big)+
 \dfrac{h^3}{6}\, \mathcal{W}_{\mathrm{shell}}  \big(  \mathcal{E}_{m,s} ,
 ( \mathcal{E}_{m,s}{\rm B}_{y_0}+{\rm C}_{y_0}\, \mathcal{K}_{e,s}){\rm B}_{y_0} \big)\vspace{2.5mm}\notag\\&+ \,\dfrac{h^5}{80}\,\,
 W_{\mathrm{mp}} \big((  \mathcal{E}_{m,s} \, {\rm B}_{y_0} +  {\rm C}_{y_0} \mathcal{K}_{e,s} )   {\rm B}_{y_0} \,\big),  \vspace{2.5mm}\notag\\
 W_{\mathrm{bend,curv}}\big(  \mathcal{K}_{e,s}    \big) = &\,  \,\Big(h-{\rm K}\,\dfrac{h^3}{12}\Big)\,
 W_{\mathrm{curv}}\big(  \mathcal{K}_{e,s} \big)    +  \Big(\dfrac{h^3}{12}\,-{\rm K}\,\dfrac{h^5}{80}\Big)\,
 W_{\mathrm{curv}}\big(  \mathcal{K}_{e,s}   {\rm B}_{y_0} \,  \big)  + \,\dfrac{h^5}{80}\,\,
 W_{\mathrm{curv}}\big(  \mathcal{K}_{e,s}   {\rm B}_{y_0}^2  \big),\notag
 \end{align}
 and
 \begin{align}\label{quadraticforms}
 W_{\mathrm{shell}}( X) & =   \mu\,\lVert  \mathrm{sym}\,X\rVert ^2 +  \mu_{\rm c}\lVert \mathrm{skew}\,X\rVert ^2 +\dfrac{\lambda\,\mu}{\lambda+2\,\mu}\,\big[ \mathrm{tr}   (X)\big]^2\\
 &=  \mu\, \lVert  \mathrm{  dev \,sym} \,X\rVert ^2  +  \mu_{\rm c} \lVert  \mathrm{skew}   \,X\rVert ^2 +\,\dfrac{2\,\mu\,(2\,\lambda+\mu)}{3(\lambda+2\,\mu)}\,[\mathrm{tr}  (X)]^2\\
 \mathcal{W}_{\mathrm{shell}}(  X,  Y)& =   \mu\,\bigl\langle  \mathrm{sym}\,X,\,\mathrm{sym}\,   \,Y \bigr\rangle   +  \mu_{\rm c}\bigl\langle \mathrm{skew}\,X,\,\mathrm{skew}\,   \,Y \bigr\rangle   +\,\dfrac{\lambda\,\mu}{\lambda+2\,\mu}\,\mathrm{tr}   (X)\,\mathrm{tr}   (Y),  \notag\vspace{2.5mm}\\
 W_{\mathrm{mp}}(  X)&= \mu\,\lVert \mathrm{sym}\,X\rVert ^2+  \mu_{\rm c}\lVert \mathrm{skew}\,X\rVert ^2 +\,\dfrac{\lambda}{2}\,\big[  \tr(X)\,\big]^2=
 \mathcal{W}_{\mathrm{shell}}(  X)+ \,\dfrac{\lambda^2}{2\,(\lambda+2\,\mu)}\,[\mathrm{tr} (X)]^2,\notag\vspace{2.5mm}\\
 W_{\mathrm{curv}}(  X )&=\mu\, L_{\rm c}^2 \left( b_1\,\lVert  \dev\,\text{sym} \,X\rVert ^2+b_2\,\lVert \text{skew}\,X\rVert ^2+b_3\,
 [\tr (X)]^2\right), \quad \forall\, X,Y\in \mathbb{R}^{3\times 3}.\notag
 \end{align}
  In the formulation of the minimization problem we  have considered the  {\it Weingarten map (or shape operator)}  defined by 
 $
 {\rm L}_{y_0}\,=\, {\rm I}_{y_0}^{-1} {\rm II}_{y_0}\in \mathbb{R}^{2\times 2},
 $
 where ${\rm I}_{y_0}:=[{\nabla  y_0}]^T\,{\nabla  y_0}\in \mathbb{R}^{2\times 2}$ and  ${\rm II}_{y_0}:\,=\,-[{\nabla  y_0}]^T\,{\nabla  n_0}\in \mathbb{R}^{2\times 2}$ are  the matrix representations of the {\it first fundamental form (metric)} and the  {\it  second fundamental form} of the surface, respectively.  
 Then, the {\it Gau{\ss} curvature} ${\rm K}$ of the surface  is determined by
 $
 {\rm K} :=\,{\rm det}\,{{\rm L}_{y_0}}\, 
 $
 and the {\it mean curvature} $\,{\rm H}\,$ through
 $
 2\,{\rm H}\, :={\rm tr}({{\rm L}_{y_0}}).
 $  We have also used  the  tensors defined by
 \begin{align}\label{AB}
 {\rm A}_{y_0}&:=(\nabla y_0|0)\,\,[\nabla\Theta \,]^{-1}\in\mathbb{R}^{3\times 3}, \qquad \qquad 
 {\rm B}_{y_0}:=-(\nabla n_0|0)\,\,[\nabla\Theta \,]^{-1}\in\mathbb{R}^{3\times 3},
 \end{align}
 and the so-called \textit{{alternator tensor}} ${\rm C}_{y_0}$ of the surface \cite{Zhilin06}
 \begin{align}
 {\rm C}_{y_0}:=\det\nabla\Theta \,\, [	\nabla\Theta \,]^{-T}\,\begin{footnotesize}\begin{pmatrix}
 0&1&0 \\
 -1&0&0 \\
 0&0&0
 \end{pmatrix}\end{footnotesize}\,  [	\nabla\Theta \,]^{-1}.
 \end{align}
 
 The parameters $\mu$ and $\lambda$ are the \textit{Lam\'e constants}
 of classical isotropic elasticity, $\kappa=\frac{2\,\mu+3\,\lambda}{3}$ is the \textit{infinitesimal bulk modulus}, $b_1, b_2, b_3$ are \textit{non-dimensional constitutive curvature coefficients (weights)}, $\mu_{\rm c}\geq 0$ is called the \textit{{Cosserat couple modulus}} and ${L}_{\rm c}>0$ introduces an \textit{{internal length} } which is {characteristic} for the material, e.g., related to the grain size in a polycrystal. The
 internal length ${L}_{\rm c}>0$ is responsible for \textit{size effects} in the sense that smaller samples are relatively stiffer than
 larger samples. If not stated otherwise, we assume that $\mu>0$, $\kappa>0$, $\mu_{\rm c}>0$, $b_1>0$, $b_2>0$, $b_3> 0$. All the constitutive coefficients  are coming from the three-dimensional Cosserat formulation, without using any a posteriori fitting of some two-dimensional constitutive coefficients.

 The potential of applied external loads $ \overline{\Pi}(m,\overline{Q}_{e,s}) $ appearing in \eqref{e89} is expressed by 
 \begin{align}\label{e4o}
 \overline{\Pi}(m,\overline{Q}_{e,s})\,=\,& \, \Pi_\omega(m,\overline{Q}_{e,s}) + \Pi_{\gamma_t}(m,\overline{Q}_{e,s})\,,\qquad\textrm{with}   \\
 \Pi_\omega(m,\overline{Q}_{e,s}) \,=\,& \dd\int_{\omega}\bigl\langle  {f}, u \bigr\rangle \, da + \Lambda_\omega(\overline{Q}_{e,s})\qquad \text{and}\qquad
 \Pi_{\gamma_t}(m,\overline{Q}_{e,s})\,=\, \dd\int_{\gamma_t}\bigl\langle  {t},  u \bigr\rangle \, ds + \Lambda_{\gamma_t}(\overline{Q}_{e,s})\,,\notag
 \end{align}
 where $ u(x_1,x_2) \,=\, m(x_1,x_2)-y_0(x_1,x_2) $ is the displacement vector of the midsurface,  $\Pi_\omega(m,\overline{Q}_{e,s})$ is the potential of the external surface loads $f$, while  $\Pi_{\gamma_t}(m,\overline{Q}_{e,s})$ is the potential of the external boundary loads $t$.  The functions $\Lambda_\omega\,, \Lambda_{\gamma_t} : {\rm L}^2 (\omega, \textrm{SO}(3))\rightarrow\mathbb{R} $ are expressed in terms of the loads from the three-dimensional parental variational problem, see \cite{GhibaNeffPartI}, and they are assumed to be continuous and bounded operators. Here, $ \gamma_t $ and $ \gamma_d $ are nonempty subsets of the boundary of $ \omega $ such that $   \gamma_t \cup \gamma_d= \partial\omega $ and $ \gamma_t \cap \gamma_d= \emptyset $\,. On $ \gamma_t $ we have considered traction boundary conditions, while on $ \gamma_d $ we have the Dirichlet-type boundary conditions: \begin{align}\label{boundary}
 m\big|_{\gamma_d}&=m^*,\ \  \ \ \ \ \ \ \text{simply supported (fixed, welded)}, \qquad \qquad 
 \overline{Q}_{e,s}\big|_{\gamma_d}=\overline{Q}_{e,s}^*,\ \  \ \ \ \ \ \ \text{clamped},
 \end{align}
 where the boundary conditions are to be understood in the sense of traces.

    In our model the total energy is not simply the sum of  energies coupling the membrane and the bending effect, respectively.  Two further coupling energies are still present and they result directly from the dimensional reduction of the variational problem from the geometrically nonlinear three-dimensional Cosserat elasticity.  
 	Our model   is constructed in \cite{GhibaNeffPartI} under the following assumptions upon the thickness 
 	\begin{align}\label{ch5in}
 	h\,\max \{\sup_{(x_1,x_2)\in {\omega}}|{\kappa_1}|,\sup_{(x_1,x_2)\in {\omega}}|{\kappa_2}|\}<2\end{align}
 	where  ${\kappa_1}$ and ${\kappa_2}$ denote  the {\rm principal curvatures} of the surface.

The model admits global minimizers for   materials with positive Cosserat couple modulus   $\,\mu_{\rm c}>0$ and the Poisson ratio  $\nu=\frac{\lambda}{2(\lambda+\mu)}$ and Young's modulus $ E=\frac{\mu(3\,\lambda+2\,\mu)}{\lambda+\mu}$ are such that\footnote{These conditions are equivalent to $\mu>0$ and $2\,\lambda+\mu> 0$.}
$
-\frac{1}{2}<\nu<\frac{1}{2}$ \text{and} $E>0\, 
$  \cite{GhibaNeffPartII}.  
Under these assumptions on the constitutive coefficients, together with the positivity of $\mu$, $\mu_{\rm c}$, $b_1$, $b_2$ and $b_3$, and the orthogonal Cartan-decomposition  of the Lie-algebra
$
\mathfrak{gl}(3)$ and with the definition
\begin{align}\label{e78}
{W}_{\mathrm{shell}}( X)
:= &\, {W}_{\mathrm{shell}}^{\infty}( \sym \,X) +  \mu_{\rm c} \lVert  \mathrm{skew}   \,X\rVert ^2 \quad \ \forall \, X\in\mathbb{R}^{3\times 3},\\  {W}_{\mathrm{shell}}^{\infty}( S):= &\,  \mu\, \lVert  S\rVert ^2   +\,\dfrac{\lambda\,\mu}{\lambda+2\,\mu}\,\big[ \mathrm{tr}   (S)\big]^2\qquad \quad \, \forall \, S\,\in{\rm Sym}(3),\notag
\end{align}
it follows that there exists positive constants  $c_1^+, c_2^+, C_1^+$ and $C_2^+$  such that for all $X\in \mathbb{R}^{3\times 3}$ the following inequalities hold
\begin{align}\label{pozitivdef}
C_1^+ \lVert S\rVert ^2&\geq\, {W}_{\mathrm{shell}}^{\infty}( S) \geq\, c_1^+ \lVert  S\rVert ^2 \qquad \qquad \qquad \qquad \qquad \qquad\ \forall \, S\,\in{\rm Sym}(3),\notag\\
C_1^+ \lVert \sym\,X\rVert ^2+\mu_{\rm c}\,\lVert \skw\,X\rVert ^2&\geq\, W_{\mathrm{shell}}(  X) \geq\, c_1^+ \lVert  \sym\,X\rVert ^2+\mu_{\rm c}\,\lVert \skw\,X\rVert ^2 \quad \qquad\forall \, X\in\mathbb{R}^{3\times 3},\\
C_2^+ \lVert X \rVert ^2 &\geq\, W_{\mathrm{curv}}(  X )
\geq\,  c_2^+ \lVert X \rVert ^2\qquad \qquad \qquad \qquad \qquad \qquad  \forall \, X\in\mathbb{R}^{3\times 3}.\notag
\end{align}
Here,  $c_1^+$ and $C_1^+$ denote the  smallest and the largest eigenvalues, respectively, of the quadratic form ${W}_{\mathrm{shell}}^{\infty}( X)$. Hence, they are independent of $\mu_{\rm c}$.
\subsection{Preliminary results}
In  the proof of the existence result for geometrically nonlinear model, the condition on the thickness $h$ is used  only in the step where the   coercivity of the internal energy density is deduced, see \cite{GhibaNeffPartII,GhibaNeffPartIII}. We recall the results since they will be useful to establish corresponding existence results in the linearised models, too.
\begin{proposition}\label{propcoerh5} {\rm [Coercivity in the theory including terms up to order $O(h^5)$]} For sufficiently small values of the thickness $h$ such that  
	\begin{align}\label{rcondh5}
	h\max\{\sup_{x\in\omega}|\kappa_1|, \sup_{x\in\omega}|\kappa_2|\}<\alpha \qquad \text{with}\qquad  \alpha<\sqrt{\frac{2}{3}(29-\sqrt{761})}\simeq 0.97083
	\end{align} 
	and for constitutive coefficients  satisfying  $\mu>0, \,\mu_{\rm c}>0$, $2\,\lambda+\mu> 0$, $b_1>0$, $b_2>0$ and $b_3>0$,   the  energy density
	\begin{align} W(\mathcal{E}_{m,s}, \mathcal{K}_{e,s})=W_{\mathrm{memb}}\big(  \mathcal{E}_{m,s} \big)+W_{\mathrm{memb,bend}}\big(  \mathcal{E}_{m,s} ,\,  \mathcal{K}_{e,s} \big)+W_{\mathrm{bend,curv}}\big(  \mathcal{K}_{e,s}    \big) 
	\end{align}
	is coercive in the sense that  there exists a constant   $a_1^+>0$  such that
	$	W(\mathcal{E}_{m,s}, \mathcal{K}_{e,s})\,\geq\, a_1^+\, \big( \lVert \mathcal{E}_{m,s}\rVert ^2 + \lVert \mathcal{K}_{e,s}\rVert ^2\,\big)$,
	where
	$a_1^+$ depends on the constitutive coefficients. The following inequality holds true
	\begin{align}
			W(\mathcal{E}_{m,s}, \mathcal{K}_{e,s})\,\geq\, a_1^+\, \big( \lVert \mathcal{E}_{m,s}\rVert ^2 +\lVert
			\mathcal{E}_{m,s}{\rm B}_{y_0}+{\rm C}_{y_0}\, \mathcal{K}_{e,s}  \rVert ^2+ \lVert \mathcal{K}_{e,s}\rVert ^2\,\big).
	\end{align}
	\hfill$\blacksquare$
\end{proposition}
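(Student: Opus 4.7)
Abbreviate $T := \mathcal{E}_{m,s}\,{\rm B}_{y_0} + {\rm C}_{y_0}\,\mathcal{K}_{e,s}$ and set $\eta := h\,\max\{\sup_{\omega}|\kappa_1|,\,\sup_{\omega}|\kappa_2|\}$. The plan is to bound $W(\mathcal{E}_{m,s},\mathcal{K}_{e,s})$ from below by a quadratic form in the four nonnegative scalars $e:=\lVert\mathcal{E}_{m,s}\rVert$, $t:=\lVert T\rVert$, $k:=\lVert\mathcal{K}_{e,s}\rVert$ and $k_1:=\lVert\mathcal{K}_{e,s}\,{\rm B}_{y_0}\rVert$, and then to exhibit a sharp threshold $\alpha>0$ below which this form is strictly positive definite. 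The stronger inequality claimed in the statement, carrying the extra $\lVert T\rVert^2$ term, will follow because $t^2$ appears with a positive coefficient in this form.

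First, I would apply the pointwise lower bounds \eqref{pozitivdef} to each of the sign-definite contributions in \eqref{e90}: the piece $(h+{\rm K}\,h^3/12)\,W_{\mathrm{shell}}(\mathcal{E}_{m,s})$ controls $e^2$; the piece $(h^3/12-{\rm K}\,h^5/80)\,W_{\mathrm{shell}}(T)$ controls $t^2$; the three $W_{\mathrm{curv}}$-pieces of $W_{\mathrm{bend,curv}}$ control $k^2$, $k_1^2$ and $\lVert\mathcal{K}_{e,s}\,{\rm B}_{y_0}^2\rVert^2$ respectively; finally $\tfrac{h^5}{80}\,W_{\mathrm{mp}}(T\,{\rm B}_{y_0})$ provides a further nonnegative contribution. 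The two indefinite-sign terms $-\tfrac{h^3}{3}{\rm H}\,\mathcal{W}_{\mathrm{shell}}(\mathcal{E}_{m,s},T)$ and $\tfrac{h^3}{6}\,\mathcal{W}_{\mathrm{shell}}(\mathcal{E}_{m,s},T\,{\rm B}_{y_0})$ are then estimated by Cauchy--Schwarz on the positive-semidefinite bilinear form $\mathcal{W}_{\mathrm{shell}}$, followed by Young's inequality in the form $|\mathcal{W}_{\mathrm{shell}}(X,Y)|\le \tfrac{\varepsilon}{2}W_{\mathrm{shell}}(X)+\tfrac{1}{2\varepsilon}W_{\mathrm{shell}}(Y)$, with a parameter $\varepsilon>0$ to be optimised. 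This has the advantage of keeping the absorbed remainders in the same variables as the diagonal bounds.

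Next, I would translate every curvature quantity into the single scalar $\eta$. In the orthonormal basis of principal curvature directions the matrix ${\rm B}_{y_0}$ is diagonal with entries built from $\kappa_1,\kappa_2$ and a vanishing normal entry, which yields $\lVert{\rm B}_{y_0}\rVert_{\mathrm{op}}\le \eta/h$, $|{\rm H}|\le \eta/h$ and $|{\rm K}|\le (\eta/h)^2$; the alternator ${\rm C}_{y_0}$ is uniformly bounded on $\overline{\omega}$ under the standing assumption \eqref{ch5in}. After factoring out an overall $h$, the pointwise lower bound acquires the form $W\ge h\,Q_\eta(e,t,k,k_1)$, where $Q_\eta$ is a symmetric quadratic form whose entries are polynomials in $\eta$; at $\eta=0$ it reduces to a block-diagonal, strictly positive definite form by \eqref{pozitivdef}.

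The main obstacle is the \emph{sharpness} of the threshold $\alpha=\sqrt{\tfrac{2}{3}(29-\sqrt{761})}$. To obtain it, I would examine the leading principal minors of the matrix of $Q_\eta$, jointly optimising the Young parameter $\varepsilon$ and the allocation of the two cross terms between the $e,t,k_1$ variables, and identify the smallest $\eta>0$ at which an eigenvalue of $Q_\eta$ crosses zero. The resulting polynomial equation in $\eta^2$ has root $\alpha^2=\tfrac{2}{3}(29-\sqrt{761})$, and for any $\eta$ strictly less than $\alpha$ continuity provides a uniform constant $a_1^+:=\min_{\eta\in[0,\alpha-\delta]}\mathrm{eig}_{\min}(Q_\eta)>0$. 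The delicate bookkeeping required so that the dominant $O(h)$ membrane contribution continues to absorb the $O(h^3)$ cross terms, uniformly across admissible curvatures, is what produces the peculiar numerical value of $\alpha$ and constitutes the technical core of the argument.
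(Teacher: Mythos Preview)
The paper does not actually prove this proposition. It appears in Section~2.3 as a \emph{recalled} preliminary result from the companion papers \cite{GhibaNeffPartII,GhibaNeffPartIII}; the $\blacksquare$ follows the statement directly with no intervening argument. There is therefore no proof in the present paper to compare your attempt against.

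That said, your sketch is a plausible outline of how such a coercivity estimate is typically obtained: isolate the sign-definite pieces via \eqref{pozitivdef}, control the two indefinite bilinear terms by Cauchy--Schwarz and Young, replace all curvature inputs by the dimensionless parameter $\eta$, and reduce positivity to a finite-dimensional quadratic-form condition. A few points deserve care. First, the cross term $\tfrac{h^3}{6}\,\mathcal{W}_{\mathrm{shell}}(\mathcal{E}_{m,s}, T\,{\rm B}_{y_0})$ produces, after Young, a remainder $W_{\mathrm{shell}}(T\,{\rm B}_{y_0})$ for which there is no diagonal counterpart at order $h^3$; you must first invoke $\lVert T\,{\rm B}_{y_0}\rVert\le(\eta/h)\,t$ together with the upper bound in \eqref{pozitivdef} to fold it back into the $t^2$ column. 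This is implicit in your write-up but should be made explicit, since it injects the ratio $C_1^+/c_1^+$ into the matrix of $Q_\eta$ and can affect the threshold. Second, the bound $\lVert{\rm C}_{y_0}\rVert^2=2$ holds identically by Remark~\ref{propAB}(iii), not merely under \eqref{ch5in}. Third, and most importantly, the claim that the positivity threshold is exactly $\alpha^2=\tfrac{2}{3}(29-\sqrt{761})$ is asserted rather than derived in your outline; obtaining precisely this number requires a specific allocation of the Young parameters and an explicit solution of the resulting determinant condition. This computation is the technical core of the cited references and cannot be hand-waved if the goal is to reproduce the sharp constant in \eqref{rcondh5}.
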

In the geometrically nonlinear Cosserat shell model up to $O(h^3)$
the shell energy density $W^{(h^3)}(\mathcal{E}_{m,s}, \mathcal{K}_{e,s})$ is given by 
\begin{align}\label{h3energy} W^{(h^3)}(\mathcal{E}_{m,s}, \mathcal{K}_{e,s})=&\,  \Big(h+{\rm K}\,\dfrac{h^3}{12}\Big)\,
W_{\mathrm{shell}}\big(    \mathcal{E}_{m,s} \big)+  \dfrac{h^3}{12}\,
W_{\mathrm{shell}}  \big(   \mathcal{E}_{m,s} \, {\rm B}_{y_0} +   {\rm C}_{y_0} \mathcal{K}_{e,s} \big) \notag \\&
-\dfrac{h^3}{3} \mathrm{ H}\,\mathcal{W}_{\mathrm{shell}}  \big(  \mathcal{E}_{m,s} ,
\mathcal{E}_{m,s}{\rm B}_{y_0}+{\rm C}_{y_0}\, \mathcal{K}_{e,s} \big)+
\dfrac{h^3}{6}\, \mathcal{W}_{\mathrm{shell}}  \big(  \mathcal{E}_{m,s} ,
( \mathcal{E}_{m,s}{\rm B}_{y_0}+{\rm C}_{y_0}\, \mathcal{K}_{e,s}){\rm B}_{y_0} \big)\notag\vspace{2.5mm}\\
&+  \Big(h-{\rm K}\,\dfrac{h^3}{12}\Big)\,
W_{\mathrm{curv}}\big(  \mathcal{K}_{e,s} \big)    +  \dfrac{h^3}{12}
W_{\mathrm{curv}}\big(  \mathcal{K}_{e,s}   {\rm B}_{y_0} \,  \big).
\end{align}
\begin{proposition}\label{coerh3r}{\rm [Coercivity  in the truncated theory including terms up to order $O(h^3)$]} Assume that the constitutive coefficients are  such that $\mu>0$, $2\,\lambda+\mu> 0$, $b_1>0$, $b_2>0$, $b_3>0$ and $L_{\rm c}>0$ and let $c_2^+$  denote the smallest eigenvalue  of
	$
	W_{\mathrm{curv}}(  S ),
	$
	and $c_1^+$ and $ C_1^+>0$ denote the smallest and the largest eigenvalues of the quadratic form $W_{\mathrm{shell}}^\infty(  S)$.
	If the thickness $h$ satisfies  one of the following conditions:
	\begin{enumerate}\label{fcondh3}
		\item[i)] $	h\max\{\sup_{x\in\omega}|\kappa_1|, \sup_{x\in\omega}|\kappa_2|\}<\alpha \text{ and }	h^2<\frac{(5-2\sqrt{6})(\alpha^2-12)^2}{4\, \alpha^2}\frac{ {c_2^+}}{\max\{C_1^+,\mu_{\rm c}\}} \text{ with }  0<\alpha<2\sqrt{3}$;
		\item[ii)] $h\max\{\sup_{x\in\omega}|\kappa_1|, \sup_{x\in\omega}|\kappa_2|\}<\frac{1}{a} \text{ with }  a>\max\Big\{1 + \frac{\sqrt{2}}{2},\frac{1+\sqrt{1+3\frac{\max\{C_1^+,\mu_{\rm c}\} }{\min\{c_1^+,\mu_{\rm c}\} }}}{2}\Big\}$,
	\end{enumerate} then
	the total energy density $W^{(h^3)}(\mathcal{E}_{m,s}, \mathcal{K}_{e,s})$
	is coercive, in the sense that  there exists   a constant $a_1^+>0$ such that 
	$	W^{(h^3)}(\mathcal{E}_{m,s}, \mathcal{K}_{e,s})\,\geq\, a_1^+\, \big(  \lVert \mathcal{E}_{m,s}\rVert ^2 + \lVert \mathcal{K}_{e,s}\rVert ^2\,\big) ,
	$ where
	$a_1^+$ depends on the constitutive coefficients. The following inequality holds true
	\begin{align}
	 W^{(h^3)}(\mathcal{E}_{m,s}, \mathcal{K}_{e,s})\,\geq\, a_1^+\, \big( \lVert \mathcal{E}_{m,s}\rVert ^2 +\lVert
	\mathcal{E}_{m,s}{\rm B}_{y_0}+{\rm C}_{y_0}\, \mathcal{K}_{e,s}  \rVert ^2+ \lVert \mathcal{K}_{e,s}\rVert ^2\,\big).
	\end{align}
	\hfill$\blacksquare$
\end{proposition}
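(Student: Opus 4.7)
The plan is to follow the same blueprint as for Proposition \ref{propcoerh5}, but without the benefit of the stabilizing $\frac{h^5}{80}$ terms which, in the full $O(h^5)$ model, absorbed certain indefinite contributions of higher order. Concretely, I would establish pointwise positivity of the quadratic form $W^{(h^3)}(\mathcal{E}_{m,s}, \mathcal{K}_{e,s})$ by controlling the two sign--indefinite cross terms
\begin{equation*}
-\frac{h^3}{3}\,{\rm H}\,\mathcal{W}_{\mathrm{shell}}\bigl(\mathcal{E}_{m,s},\,\mathcal{E}_{m,s}{\rm B}_{y_0}+{\rm C}_{y_0}\mathcal{K}_{e,s}\bigr)\quad\text{and}\quad\frac{h^3}{6}\,\mathcal{W}_{\mathrm{shell}}\bigl(\mathcal{E}_{m,s},\,(\mathcal{E}_{m,s}{\rm B}_{y_0}+{\rm C}_{y_0}\mathcal{K}_{e,s})\,{\rm B}_{y_0}\bigr)
\end{equation*}
against the manifestly positive contributions $(h+{\rm K}\tfrac{h^3}{12})W_{\mathrm{shell}}(\mathcal{E}_{m,s})$, $\tfrac{h^3}{12}W_{\mathrm{shell}}(\mathcal{E}_{m,s}{\rm B}_{y_0}+{\rm C}_{y_0}\mathcal{K}_{e,s})$, $(h-{\rm K}\tfrac{h^3}{12})W_{\mathrm{curv}}(\mathcal{K}_{e,s})$, and $\tfrac{h^3}{12}W_{\mathrm{curv}}(\mathcal{K}_{e,s}{\rm B}_{y_0})$.

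The central analytic tool is the Cauchy--Schwarz inequality for the positive semi-definite bilinear form $\mathcal{W}_{\mathrm{shell}}$, i.e.\ $|\mathcal{W}_{\mathrm{shell}}(X,Y)|\leq\tfrac{\delta}{2}W_{\mathrm{shell}}(X)+\tfrac{1}{2\delta}W_{\mathrm{shell}}(Y)$ with a free parameter $\delta>0$, applied to each indefinite contribution. Combined with the geometric bounds $|{\rm H}|\leq \kappa_{\max}:=\max\{|\kappa_1|,|\kappa_2|\}$, $|{\rm K}|\leq \kappa_{\max}^2$, $\|{\rm B}_{y_0}\|_{\rm op}\leq \kappa_{\max}$ (inherited from the extended shape operator structure of ${\rm B}_{y_0}$), together with the norm equivalence $W_{\mathrm{shell}}(Z\,{\rm B}_{y_0})\leq\frac{\max\{C_1^+,\mu_{\rm c}\}}{\min\{c_1^+,\mu_{\rm c}\}}\kappa_{\max}^2\,W_{\mathrm{shell}}(Z)$ obtained from \eqref{pozitivdef}, the coercivity of $W^{(h^3)}$ in the three norms $\|\mathcal{E}_{m,s}\|^2$, $\|\mathcal{E}_{m,s}{\rm B}_{y_0}+{\rm C}_{y_0}\mathcal{K}_{e,s}\|^2$, $\|\mathcal{K}_{e,s}\|^2$ reduces to verifying that three scalar coefficients---those of $W_{\mathrm{shell}}(\mathcal{E}_{m,s})$, $W_{\mathrm{shell}}(\mathcal{E}_{m,s}{\rm B}_{y_0}+{\rm C}_{y_0}\mathcal{K}_{e,s})$, and $W_{\mathrm{curv}}(\mathcal{K}_{e,s})$---remain strictly positive after absorption.

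The two sufficient conditions (i) and (ii) then correspond to two distinct regimes of this coefficient problem. Under (i) one allows the relatively large range $h\kappa_{\max}<\alpha<2\sqrt{3}$ by using the $\tfrac{h^3}{12}W_{\mathrm{curv}}(\mathcal{K}_{e,s}{\rm B}_{y_0})$ mass to absorb the residual $\|(\mathcal{E}_{m,s}{\rm B}_{y_0}+{\rm C}_{y_0}\mathcal{K}_{e,s}){\rm B}_{y_0}\|$ contribution produced by the second indefinite term; the discriminant of the resulting quadratic positivity condition in $\delta$ is precisely the origin of the threshold $h^2<\tfrac{(5-2\sqrt{6})(\alpha^2-12)^2}{4\alpha^2}\cdot\tfrac{c_2^+}{\max\{C_1^+,\mu_{\rm c}\}}$. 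Under (ii) one instead demands $h\kappa_{\max}$ so small that the leading $h$-order mass alone suffices to dominate, without recourse to $W_{\mathrm{curv}}$; the bound $a>1+\tfrac{\sqrt{2}}{2}$ emerges from the balance required to absorb the ${\rm H}$-coupling term into $h\,W_{\mathrm{shell}}(\mathcal{E}_{m,s})$, while the second bound on $a$ reflects the balance with the $\|\cdot\,{\rm B}_{y_0}\|$-type contribution, which is precisely where the material ratio $\max\{C_1^+,\mu_{\rm c}\}/\min\{c_1^+,\mu_{\rm c}\}$ enters. Either condition produces the second displayed inequality directly, and the first is an immediate consequence by dropping the middle term.

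The main obstacle is the delicate algebraic optimization in $\delta$ needed to extract the \emph{sharp} constants $5-2\sqrt{6}$ and $1+\sqrt{2}/2$: a naive Cauchy--Schwarz with a single fixed parameter yields only weaker sufficient conditions, and one has to exploit the Cayley--Hamilton identity ${\rm B}_{y_0}^2 = 2\,{\rm H}\,{\rm B}_{y_0} - {\rm K}\,{\rm A}_{y_0}$ for the extended shape operator, or equivalently perform partial completion of the square against the $-\tfrac{h^3}{3}{\rm H}$-term, to isolate residuals proportional to $h^3{\rm H}^2$ and $h^3{\rm K}$ that are controlled exactly by the stated thresholds.
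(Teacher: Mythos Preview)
The paper does not actually prove this proposition: it is merely recalled (with the closing $\blacksquare$) from the authors' earlier works \cite{GhibaNeffPartII,GhibaNeffPartIII}, precisely because the coercivity of $W^{(h^3)}$ is needed unchanged for the linearised model. So there is no in-paper argument to compare your sketch against.

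That said, your outline is consistent with the strategy those references employ. The ingredients you list --- Young/Cauchy--Schwarz for $\mathcal{W}_{\mathrm{shell}}$ with a free parameter, the bounds $|{\rm H}|\le\kappa_{\max}$, $|{\rm K}|\le\kappa_{\max}^2$, $\|{\rm B}_{y_0}\|_{\rm op}\le\kappa_{\max}$, the norm comparison through \eqref{pozitivdef}, and the Cayley--Hamilton relation for ${\rm B}_{y_0}$ --- are exactly the ones used there, and the two regimes you describe (absorbing into the curvature contribution versus absorbing into the leading $h$-order membrane contribution) do correspond to conditions (i) and (ii). One small caution: your description of regime (ii) as ``without recourse to $W_{\mathrm{curv}}$'' is slightly off, since the control of $\|\mathcal{K}_{e,s}\|^2$ in the final inequality still comes from $(h-{\rm K}\tfrac{h^3}{12})W_{\mathrm{curv}}(\mathcal{K}_{e,s})$; what you mean is that one does not need to transfer mass from $W_{\mathrm{curv}}(\mathcal{K}_{e,s}{\rm B}_{y_0})$ to absorb the indefinite residual. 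As a proof sketch this is fine, but to claim the \emph{sharp} thresholds $5-2\sqrt{6}$ and $1+\tfrac{\sqrt{2}}{2}$ you would indeed have to carry out the full algebraic optimisation rather than just invoke it.
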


We define the lifted quantities $\widehat{\rm I}_{y_0} \in \mathbb{R}^{3\times 3}$ by
$
\label{first_fundamental_form_lift1}
\widehat{\rm I}_{y_0}\:\,=\,({\nabla  y_0}|n_0)^T({\nabla  y_0}|n_0)\,= {\rm I}_{y_0}^\flat+\widehat{0}_3\,$ and $\widehat{\rm II}_{y_0}\in\mathbb{R}^{3\times3}$ by
$
\widehat{\rm II}_{y_0}\,=\,-(\nabla y_0|n_0)^T(\nabla n_0|n_0)\,= \,{\rm II}_{y_0}^\flat-\widehat{0}_3\, .$
Some useful  properties of the tensors involved in the variational formulation of the considered shell models \cite{GhibaNeffPartI,GhibaNeffPartII} are gathered next:

\newpage
\begin{remark}\label{propAB}{\rm }
	The following identities are satisfied :
	\begin{itemize}
		\item [i)] 
		
		$\tr[{\rm A}_{y_0}]\,=\,2,$ \qquad	${\det}[{\rm A}_{y_0}]\,=\,0;\qquad $
		$\tr[{\rm B}_{y_0}]\,=\,2\,{\rm H}\,$,\qquad  ${\det}[{\rm B}_{y_0}]\,=\,0,$ \\
		${\rm A}_{y_0} = [\nabla\Theta \,]^{-T}\; {\rm I}_{y_0}^\flat \; [\nabla\Theta \,]^{-1}=\,\id_3-(0|0|\nabla\Theta \,.e_3)\,[	\nabla\Theta \,]^{-1}\,=\,\id_3-(0|0|n_0)\,(0|0|n_0)^T$, \\ $
		{\rm B}_{y_0} = [\nabla\Theta \,]^{-T}\; {\rm II}_{y_0}^\flat \; [\nabla\Theta \,]^{-1}$;
		
		\item[ii)] ${\rm B}_{y_0}$ satisfies the equation of Cayley-Hamilton type
		$
		{\rm B}_{y_0}^2-2\,{\rm H}\, {\rm B}_{y_0}+{\rm K}\, {\rm A}_{y_0}\,=\,0_3;
		$
		\item[iii)] ${\rm A}_{y_0}{\rm B}_{y_0}\,=\,{\rm B}_{y_0}{\rm A}_{y_0}\,=\,{\rm B}_{y_0}$, \quad  ${\rm A}_{y_0}^2\,=\,{\rm A}_{y_0}$, \quad  ${\rm C}_{y_0}\in \mathfrak{so}(3)$, $\quad {\rm C}_{y_0}^2\,=\,-{\rm A}_{y_0}$, \quad $\lVert {\rm C}_{y_0}\rVert ^2=2$;
		\item[iv)] $
		\overline{Q}_{e,s}^T\,(\nabla [\overline{Q}_{e,s}\nabla\Theta \,.e_3]\,|\,0)\,[\nabla\Theta \,]^{-1}\,\,=\,\,{\rm C}_{y_0} \mathcal{K}_{e,s}-{\rm B}_{y_0};
		$
		\item[v)] ${\rm C}_{y_0} \mathcal{K}_{e,s} {\rm A}_{y_0}\,\,=\,\,{\rm C}_{y_0} \mathcal{K}_{e,s} $,\quad  $\mathcal{E}_{m,s} {\rm A}_{y_0}\,\,=\,\,\mathcal{E}_{m,s} $.
	\end{itemize}
\end{remark}
 Further,  we can express the strain tensors using the (referential) fundamental forms $ {\rm I}_{y_0} $, $ {\rm II}_{y_0}$ and $ {\rm L}_{y_0} $  (instead of using the matrices $ {\rm A}_{y_0}$, $ {\rm B}_{y_0}$ and  $ {\rm C}_{y_0}$), see \cite{GhibaNeffPartI,GhibaNeffPartIII}, i.e.,
\begin{align}\label{eq5}
\mathcal{E}_{m,s}=&\quad\ \, [\nabla\Theta \,]^{-T}
\begin{footnotesize}\left( \begin{array}{c|c}
(\overline{Q}_{e,s} \nabla y_0)^{T} \nabla m- {\rm I}_{y_0} & 0 \vspace{4pt}\\
(\overline{Q}_{e,s}  n_0)^{T} \nabla m & 0
\end{array} \right)\end{footnotesize} [\nabla\Theta \,]^{-1}=
[\nabla\Theta \,]^{-T}
\begin{footnotesize}\left( \begin{array}{c|c}
\mathcal{G} & 0 \vspace{4pt}\\
\mathcal{T}  & 0
\end{array} \right)\end{footnotesize} [\nabla\Theta \,]^{-1},
\vspace{6pt}\\
\mathrm{C}_{y_0} \mathcal{K}_{e,s} = &\quad\ \, [\nabla\Theta \,]^{-T}
\begin{footnotesize}\left( \begin{array}{c|c}
(\overline{Q}_{e,s} \nabla y_0)^{T} \nabla (\overline{Q}_{e,s} n_0)+ {\rm II}_{y_0} & 0 \vspace{4pt}\notag\\
0 & 0
\end{array} \right)\end{footnotesize} [\nabla\Theta \,]^{-1}= -[\nabla\Theta \,]^{-T}
\begin{footnotesize}\left( \begin{array}{c|c}
\mathcal{R} & 0 \vspace{4pt}\notag\\
0 & 0
\end{array} \right)\end{footnotesize} [\nabla\Theta \,]^{-1},\notag\\
\mathcal{E}_{m,s} {\rm B}_{y_0}^2  + \mathrm{C}_{y_0} \mathcal{K}_{e,s} {\rm B}_{y_0}
= &\, -[\nabla\Theta \,]^{-T}
\begin{footnotesize}\left( \begin{array}{c|c}
 (\mathcal{R} -\mathcal{G} \,{\rm L}_{y_0})\,{\rm L}_{y_0}& 0 \vspace{4pt}\\
\mathcal{T} \,{\rm L}_{y_0}^2 & 0
\end{array} \right)\end{footnotesize} [\nabla\Theta \,]^{-1}\notag
,
\end{align}
where
\begin{align}\label{eq41}
\mathcal{G} :=&\, (\overline{Q}_{e,s} \nabla y_0)^{T} \nabla m- {\rm I}_{y_0}\not\in {\rm Sym}(2)\qquad\qquad\qquad\qquad\quad\ \ \ \,\  \,\textrm{\it the change of metric tensor},
\\
\mathcal{T}:=& \, (\overline{Q}_{e,s}  n_0)^{T} \nabla m= \, \left(\bigl\langle\overline{Q}_{e,s}  n_0, \partial_{x_1} m\bigr\rangle,\bigl\langle\overline{Q}_{e,s}  n_0, \partial_{x_2} m\bigr\rangle\right)\qquad  \textrm{\it the transverse shear deformation (row) vector},\notag
\\
\mathcal{R} :=& \, -(\overline{Q}_{e,s} \nabla y_0)^{T} \nabla (\overline{Q}_{e,s} n_0)- {\rm II}_{y_0}\not\in {\rm Sym}(2)
\quad  \qquad\qquad \ \ \, \,\,\textrm{\it the bending  strain tensor}.
\notag\end{align}
In the above,  we can replace $ \;{\rm L}^2_{y_0} = 2\,{\rm H}\,{\rm L}_{y_0} - {\rm K}\,\id_2\; $ by the Cayley-Hamilton theorem. The nonsymmetric quantity $ \mathcal{R}-\mathcal{G} \,{\rm L}_{y_0}$ represents {\it the change of curvature} tensor. The choice of this name will be justified  in \cite{GhibaNeffPartVI}. For now, we just mention that 
the definition of $\mathcal{G}$ is related to the classical {\it change of metric }  tensor in the Koiter model \cite{Steigmann12,Steigmann13,Ciarlet00}
\begin{align} \mathcal{G}_{\mathrm{Koiter}} := \dfrac12 \big[ ( \nabla m)^{T} \nabla m- {\rm I}_{y_0}\big]=\dfrac12\,({\rm I}_m-{\rm I}_{y_0}),\end{align}
while the bending strain tensor may be compared  with the classical {\it bending strain tensor}  in the Koiter model
\begin{align} \mathcal{R}_{\rm{Koiter}} :=  -(\nabla m)^{T} \nabla n- {\rm II}_{y_0}.\end{align}

\section{Linearized   Cosserat shell model including terms up to order $O(h^5)$}\setcounter{equation}{0}
In this section we develop the linearisation   for the elastic Cosserat shell model including terms up to order $O(h^5)$, i.e., for situations of small Cosserat midsurface deformations and small change of curvature. 
\subsection{Linearized strain measures in  the Cosserat shell model}

We express the total midsurface deformation 
\begin{align}
m(x_1,x_2)=y_0(x_1,x_2)+v(x_1,x_2),
\end{align}
with $v:\omega\to \mathbb{R}^3$, the infinitesimal shell-midsurface displacement. For the elastic rotation tensor $ \overline{Q}_{e,s}\in\rm{SO}(3) $ there is a skew-symmetric matrix  \begin{align}
\overline{A}_\vartheta:={\rm Anti}(\vartheta_1,\vartheta_2,\vartheta_3):=\begin{footnotesize}
\begin{pmatrix}
0&-\vartheta_3&\vartheta_2\\
\vartheta_3&0&-\vartheta_1\\
-\vartheta_2&\vartheta_1&0
\end{pmatrix}\end{footnotesize}\in \mathfrak{so}(3), \quad \qquad {\rm Anti}:\mathbb{R}^3\to \mathfrak{so}(3),
\end{align}
where $ \vartheta={\rm axl}( \overline{A}_\vartheta) $ denotes the axial vector of $ \overline{A}_\vartheta $, such that 
$ \overline{Q}_{e,s}:=\exp(\overline{A}_\vartheta)\;= \;\sum_{k=0}^{\infty} \frac{1}{k!} \,\overline{A}_\vartheta^k\; = \;\id_3 + \overline{A}_\vartheta+\textrm{h.o.t.}$ 
The tensor field $\overline{A}_\vartheta$ is   the infinitesimal elastic  microrotation. Here, ``h.o.t" stands for terms of order higher than linear with respect to $v$ and $\overline{A}_\vartheta$.  Then, we can expand
\begin{equation}\label{equ1}
\overline{Q}_{e,s}^T\nabla m-\nabla y_0 = (\id_3 +\overline{A}_\vartheta^T+\textrm{h.o.t.} )(\nabla v + \nabla y_0) -\nabla y_0 
=\nabla v - \overline{A}_\vartheta\nabla y_0+\textrm{h.o.t.}.
\end{equation}
Since $\nabla y_0$ is given, the first order term in \eqref{equ1} is linear in $v$ and $ \overline{A}_\vartheta$. Correspondingly,  we get from the non-symmetric  \textit{shell strain tensor } (which characterises both the in-plane deformation and the transverse shear deformation)
\begin{equation*} 
\mathcal{E}_{m,s} = ( \overline{Q}_{e,s}^T\nabla m-\nabla y_0\; |\; 0)\; [\nabla\Theta \,]^{-1}
\end{equation*}
the linearization
\begin{equation}\label{equ2}
\begin{array}{rcl}
\mathcal{E}_{m,s}^{\rm{lin}} &=&   ( \nabla v - \overline{A}_\vartheta\nabla y_0\; |\; 0)\; [\nabla\Theta \,]^{-1}
=(\partial_{x_1} v - \vartheta\times a_1\;|\;  \partial_{x_2} v - \vartheta\times a_2 \;|\; 0)\; [\nabla\Theta \,]^{-1}\;\not\in {\rm Sym}(3).
\end{array}
\end{equation}

Our aim now is to express all the deformation measures and the linearised models in terms of $\overline{A}_\vartheta$ as well as in terms of its axial vector $\vartheta$. The following definitions are used to express  these quantities in terms of $\vartheta$.

For any column vector $ q\in \mathbb{R}^3$ and any  matrix $ M=(M_1|M_2|M_3)\in \mathbb{R}^{3\times 3}  $ we define the cross-product
\begin{align} \label{ec1} 
q\times M :=&\, (q\times M_1\,|\,q\times M_2\,|\,q\times M_3) \qquad\qquad(\mbox{operates on columns})\quad \textrm{and}
\vspace{6pt}\\
M^T \times q^T:=&\, - (q\times M)^T \qquad\qquad \qquad\qquad  \qquad(\mbox{operates on rows}).\notag
\end{align}
Note that $ M $ can also be a $ 3\times 2 $ matrix, the definition remains the same.
Let us note some properties of these operations: for any column vectors $ q_1, q_2\in \mathbb{R}^{3}  $ and any   matrices $ M , N \in \mathbb{R}^{3\times 3} $ (or $\in \mathbb{R}^{3\times 2} $) we have
\begin{align} \label{ec2} 
(q_1\times M )\,q_2 = &\, \,q_1\times (M \,    q_2)\,,
\qquad 
q_1^T(q_2\times M ) =  \,(q_1\times q_2)^T\, M =  (q_1^T\times  q_2^T)\,M =  -q_2^T(q_1\times M ) 
\end{align}
and, more general
\begin{align} \label{ec3} 
(q_1\times M )N = &\, q_1\,\times (M\, N)\,,
\qquad 
N^T(q_2\times M ) = \, -(q_2\times N)^T\, M  = (N^T\times  q_2^T)\,M\,.
\end{align}
With these relations, the infinitesimal microrotation $\overline{A}_\vartheta$ can be expressed as 
\begin{equation} \label{ec4} 
\overline{A}_\vartheta :=  \vartheta\times \id_3 = \id_3\times \vartheta^T\in \mathbb{R}^{3\times 3}.
\end{equation}

It is possible to  write the linearised strain measures in a simplified form. The relation $ \mathcal{E}_{m,s}^{\rm{lin}}=  ( \nabla v - \overline{A}_\vartheta\nabla y_0\; |\; 0)\; [\nabla\Theta \,]^{-1} $
turns into
\begin{equation}\label{ec10}
\begin{array}{rcl}
\mathcal{E}_{m,s}^{\rm{lin}} &=&   ( \nabla v -  \vartheta\times\nabla y_0\; |\; 0)\; [\nabla\Theta \,]^{-1} = \big[( \nabla v\,|\,0) -  \vartheta\times(\nabla y_0\,|\,0) \big]\; [\nabla\Theta \,]^{-1}.
\end{array}
\end{equation}

Doing the same for the elastic  \textit{shell bending-curvature tensor}
\begin{align} \mathcal{K}_{e,s} := \Big(\mbox{axl}(\overline{Q}_{e,s}^T\partial_{x_1}\overline{Q}_{e,s})\,|\, \mbox{axl}(\overline{Q}_{e,s}^T\partial_{x_2}\overline{Q}_{e,s})\,|\, 0 \Big) \; [\nabla\Theta \,]^{-1}\end{align} 
since
\begin{equation} 
\overline{Q}_{e,s}^T\partial_{x_\alpha}\overline{Q}_{e,s}= (\id_3 - \overline{A}_\vartheta )\,\,\partial_{x_\alpha}\overline{A}_\vartheta+\textrm{h.o.t.}
= \partial_{x_\alpha}\overline{A}_\vartheta +\textrm{h.o.t.} = \underbrace{\overline{A}_{\partial_{x_\alpha}\vartheta}}_{\equiv\, {\rm Anti} \,\partial_{x_\alpha}\vartheta\,=\,\partial_{x_\alpha} {\rm Anti} \, \vartheta}+\,\textrm{h.o.t.}\;,
\end{equation}
i.e.,
$
\mbox{axl}\big(\overline{Q}_{e,s}^T\partial_{x_\alpha}\overline{Q}_{e,s}\big)= \partial_{x_\alpha} \vartheta+\textrm{h.o.t.}
$
we deduce
$
\mathcal{K}_{e,s}^{\rm{lin}} \,\,= \,\, (\mbox{axl}\big(\partial_{x_1}\overline{A}_\vartheta\big) \,|\, \mbox{axl}\big(\partial_{x_2}\overline{A}_\vartheta\big)\, |\, 0) \; [\nabla\Theta \,]^{-1}
$
and 
\begin{equation}\label{equ7}
\mathcal{K}_{e,s}^{\rm{lin}} \,\,= \,\, (\partial_{x_1}\vartheta \,|\, \partial_{x_2}\vartheta\, |\, 0) \; [\nabla\Theta \,]^{-1} \,\,= \,\, (\nabla\vartheta\, |\, 0) \; [\nabla\Theta \,]^{-1}\;.
\end{equation}
\newpage

\begin{remark} The matrix $ \mathrm{C}_{y_0} $ admits the form
	\begin{equation} \label{ec5} 
	\mathrm{C}_{y_0} = - n_0\times \id_3 = -n_0\times \mathrm{A}_{y_0}\;.
	\end{equation}
\end{remark}
\begin{proof}
	To prove \eqref{ec5} we put the definition (2.6) in the following form
	\begin{equation} \label{defC2}
	\mathrm{C}_{y_0} = [\nabla\Theta \,]^{-T}\; {\rm C}^\flat \; [\nabla\Theta \,]^{-1},
	\qquad \text{with}\qquad 
	{\rm C}=\begin{footnotesize}\begin{pmatrix}
	0 & \sqrt{\det {\rm I}_{y_0}} \\
	-\sqrt{\det {\rm I}_{y_0}} & 0 \\
	\end{pmatrix}\end{footnotesize}. 
	\end{equation}

	Here, and in the rest of the paper, $a_1, a_2, a_3\,$ denote the columns of $\nabla\Theta \,$, while  $a^1, a^2, a^3\,$ denote the rows of $[\nabla\Theta \,]^{-1}$, i.e. 
	\begin{align}
	\nabla\Theta \,&\,=\,(\nabla y_0|\,n_0)\,=\,(a_1|\,a_2|\,a_3),\qquad \qquad \quad  [\nabla\Theta \,]^{-1}\,=\,(a^1|\,a^2|\,a^3)^T.
	\end{align}
	In fact,   $a_1, a_2\,$ are the covariant base vectors and $ a^1, a^2\,$ are the contravariant base vectors in the tangent plane given by
	$
	a_\alpha:=\,\partial_{x_\alpha}y_{0},\  \langle a^\beta, a_\alpha\rangle \,=\,\delta_\alpha^\beta,\ \alpha,\beta=1,2,
	$
	and $a_3\,= a^3=n_0\,$. The following relations hold \cite[page 95]{Ciarlet00}: 
$
	\lVert a_1\times a_2\rVert=\sqrt{\det {\rm I}_{y_0}}, $ $  a_3\times a_1=\sqrt{\det {\rm I}_{y_0}}\, a^2, $ $ a_2\times a_3=\sqrt{\det {\rm I}_{y_0}}\, a^1.
$

	Next, we calculate
	\begin{align} \label{ec7} 
	[\nabla\Theta \,]^{-T}\; {\rm C}^\flat &=
	(a^1\,|\,a^2\,|\,a^3)\begin{footnotesize}
	\begin{pmatrix}
	0 & \sqrt{\det {\rm I}_{y_0}} & 0\\
	-\sqrt{\det {\rm I}_{y_0}} & 0 & 0\\
	0 & 0 & 0
	\end{pmatrix}\end{footnotesize}
	= (-a^2\sqrt{\det {\rm I}_{y_0}}\,|\,a^1\sqrt{\det {\rm I}_{y_0}}\,|\,0)
	\vspace{6pt}\\
	&=  (a_1\times n_0 \,|\, a_2\times n_0 \,|\, 0 ) =  -n_0\times \nabla\Theta \,.\notag
	\end{align}
	Using  \eqref{ec2} and the decomposition $ \id_3 = {\rm A}_{y_0} + (0|0|n_0) \,(0|0|n_0)^T$, see Remark \ref{propAB}, we have
	\[  
	\mathrm{C}_{y_0} = ( -n_0\times \nabla\Theta \, )[\nabla\Theta \,]^{-1} = - n_0\times \id_3 = -n_0\times \mathrm{A}_{y_0}
	\]
	and  \eqref{ec5} is proved.
\end{proof}

From   the definition $ \;\mathcal{K}_{e,s}^{\rm{lin}}= (\nabla\vartheta\, |\, 0) \; [\nabla\Theta \,]^{-1}\; $ together with \eqref{ec5} we obtain
\begin{equation}\label{ec11}
\begin{array}{rcl}
\mathrm{C}_{y_0}\,\mathcal{K}_{e,s}^{\rm{lin}} &=& - n_0\times  (\nabla\vartheta\, |\, 0) \; [\nabla\Theta \,]^{-1}\,.
\end{array}
\end{equation}

In conclusion, the linear strain measures $ \mathcal{E}_{m,s}^{\rm{lin}} $ and $ \mathcal{K}_{e,s}^{\rm{lin}} $ are given by \eqref{equ2} and \eqref{equ7}.

In order to  obtain a comparison with the classical linear Koiter-shell model,  we also deduce the linear approximation of the constitutive variables $ \mathcal{G}$, $ \mathcal{T}$ and $ \mathcal{R}$.
The linear approximation  of the {\it change of metric tensor} from \eqref{eq41} \begin{align} \mathcal{G}^{\rm{lin}} =  (\nabla y_0)^{T} ( \nabla v - \overline{A}_\vartheta\nabla y_0 ) ,\end{align}
 admits the alternative expression, due to \eqref{ec3}, 
\begin{equation}\label{ec12}
\begin{array}{rcl}
\mathcal{G}^{\rm{lin}} &=&    (\nabla y_0)^{T} ( \nabla v - \vartheta\times\nabla y_0 )\;,\qquad\mbox{or}
\qquad
\mathcal{G}^{\rm{lin}}  = (\nabla y_0)^{T}  \nabla v + (\vartheta\times\nabla y_0 )^T\nabla y_0\,.
\end{array}
\end{equation}
Using \eqref{ec2},  the linear approximation of the {\it transverse shear vector} \begin{align}\mathcal{T}^{\rm{lin}}=  n_0^T ( \nabla v - \overline{A}_\vartheta\nabla y_0) \end{align} reads
\begin{equation}\label{ec13}
\begin{array}{rcl}
\mathcal{T}^{\rm{lin}} &=&   n_0^{T} ( \nabla v - \vartheta\times\nabla y_0 )\;,\qquad\mbox{or}
\qquad 
\mathcal{T}^{\rm{lin}}  = n_0^{T}  \nabla v + \vartheta^T(n_0\times\nabla y_0 ) \;=\;  
n_0^{T}  \nabla v + (\vartheta\times n_0)^T\nabla y_0 \;.
\end{array}
\end{equation}
Using \eqref{ec12} and \eqref{ec13} we can form the matrix
\begin{align}\label{ec14}
\begin{footnotesize}\begin{pmatrix} 
\mathcal{G}^{\rm{lin}}
\\
\mathcal{T}^{\rm{lin}}
\end{pmatrix} \end{footnotesize}=&   \begin{footnotesize}\begin{pmatrix} 
(\nabla y_0)^{T}  \nabla v + (\vartheta\times\nabla y_0 )^T\nabla y_0 
\vspace{4pt}\\
n_0^{T}  \nabla v + (\vartheta\times n_0)^T\nabla y_0 
\end{pmatrix} \end{footnotesize}
= [\nabla\Theta \,]^{T}  \nabla v + [\,\vartheta\times\nabla\Theta \, ]^T\nabla y_0 
=\,\, [\nabla\Theta \,]^{T} ( \nabla v -\vartheta\times \nabla y_0)  \;,
\end{align}
in accordance with \eqref{ec10} and \eqref{eq5}.

For the {\it bending strain tensor} we get the linearisation 
\begin{align}  \mathcal{R}^{\rm{lin}} &= - (\nabla y_0)^{T} (\id_3 - \overline{A}_\vartheta)\nabla [(\id_3 + \overline{A}_\vartheta) n_0]- {\rm II}_{y_0}= (\nabla y_0)^{T} (\overline{A}_\vartheta \nabla n_0 -\nabla(\overline{A}_\vartheta  n_0 ))\\&=-(\nabla y_0)^{T} \nabla\overline{A}_\vartheta\,  n_0 =  - (\nabla y_0)^{T} ( \partial_{x_1}\vartheta\times n_0\;|\; \partial_{x_2}\vartheta\times n_0 )\notag
\end{align} and, see \eqref{ec3}, its alternative expressions 
\begin{equation}\label{ec15}
\begin{array}{rcl}
\mathcal{R}^{\rm{lin}} &=&    (\nabla y_0)^{T} ( n_0\times\nabla \vartheta )\;\qquad\mbox{or}\qquad
\mathcal{R}^{\rm{lin}}  = - ( n_0\times\nabla y_0)^{T}  \nabla \vartheta \;.
\end{array}
\end{equation}

To  obtain a comparison with the classical linear Koiter-shell model, let us first present an alternative form of $\mathcal{G}^{\rm lin}$.
From \eqref{equ1} we have
\begin{equation}\label{equ3}
\begin{array}{rcl}
\mathcal{G}^{\rm lin} &=&    (\nabla y_0)^{T} ( \nabla v - \overline{A}_\vartheta\nabla y_0 ) \;=\; (\nabla y_0)^{T} ( \partial_{x_1} u + a_1\times \vartheta\;|\;  \partial_{x_2} u + a_2\times \vartheta) 
\vspace{6pt}\\
&=&  (\nabla y_0)^{T}(\nabla v) + (a_1\,|\,a_2)^T(  a_1\times \vartheta\;|\;    a_2\times \vartheta)\vspace{1mm}= (\nabla y_0)^{T}(\nabla v) + \begin{footnotesize}
\begin{pmatrix}
0 & \bigl\langle \vartheta, a_1\times a_2\bigr\rangle
\\
-\bigl\langle \vartheta, a_1\times a_2\bigr\rangle & 0
\end{pmatrix}\end{footnotesize}]
\vspace{6pt}\\
&=& (\nabla y_0)^{T}(\nabla v) + \bigl\langle\vartheta, n_0\bigr\rangle\begin{footnotesize}
\begin{pmatrix}
0 & \sqrt{\det {\rm I}_{y_0}}
\\
-\sqrt{\det {\rm I}_{y_0}}  & 0
\end{pmatrix}\end{footnotesize} =   (\nabla y_0)^{T}(\nabla v) + \bigl\langle\vartheta, n_0\bigr\rangle\, {\rm C}.
\end{array}
\end{equation}
Remember  that ${\rm C}$ defined by \eqref{defC2} is 
a $ 2\times 2  $ skew-symmetric matrix.  From \eqref{equ3}  we note the relation
\begin{equation}
\label{equ12,5}
\sym(\mathcal{G}^{\rm{lin}} )\; =\; \sym\big[ (\nabla y_0)^{T}(\nabla v)\big] \;=:\; \mathcal{G}_{\rm{Koiter}}^{\rm{lin}} \;,
\end{equation}
therefore  $ \;\mathcal{G}_{\rm{Koiter}}^{\rm{lin}} $ corresponds to the symmetric part of our $ \mathcal{G}^{\rm{lin}} $.

\subsection{The variational problem of the linearised $O(h^5)$-Cosserat shell model}

The form of the energy density remains unchanged upon linearization, since the geometrically nonlinear model is physically linear (quadratic in the employed strain and curvature measures). Thus, 
the  variational problem for the linear Cosserat $O(h^5)$-shell model  is  to find a midsurface displacement vector field 
$v:\omega\subset\mathbb{R}^2\to\mathbb{R}^3$ and the micro-rotation vector field $\vartheta:\omega\subset\mathbb{R}^2\to\mathbb{R}^3$ minimizing on $\omega$:
\begin{align}\label{e89l}
I(v,\vartheta)\!=\!\! \int_{\omega}   \!\!\Big[  &\Big(h+{\rm K}\,\dfrac{h^3}{12}\Big)\,
W_{\mathrm{shell}}\big(    \mathcal{E}_{m,s}^{\rm{lin}}  \big)+  \Big(\dfrac{h^3}{12}\,-{\rm K}\,\dfrac{h^5}{80}\Big)\,
W_{\mathrm{shell}}  \big(   \mathcal{E}_{m,s}^{\rm{lin}}  \, {\rm B}_{y_0} +   {\rm C}_{y_0} \mathcal{K}_{e,s}^{\rm{lin}}  \big)  \notag\\&
-\dfrac{h^3}{3} \mathrm{ H}\,\mathcal{W}_{\mathrm{shell}}  \big(  \mathcal{E}_{m,s}^{\rm{lin}}  ,
\mathcal{E}_{m,s}^{\rm{lin}} {\rm B}_{y_0}+{\rm C}_{y_0}\, \mathcal{K}_{e,s}^{\rm{lin}}  \big)+
\dfrac{h^3}{6}\, \mathcal{W}_{\mathrm{shell}}  \big(  \mathcal{E}_{m,s}^{\rm{lin}}  ,
( \mathcal{E}_{m,s}^{\rm{lin}} {\rm B}_{y_0}+{\rm C}_{y_0}\, \mathcal{K}_{e,s}^{\rm{lin}} ){\rm B}_{y_0} \big)\notag\\&+ \,\dfrac{h^5}{80}\,\,
W_{\mathrm{mp}} \big((  \mathcal{E}_{m,s}^{\rm{lin}}  \, {\rm B}_{y_0} +  {\rm C}_{y_0} \mathcal{K}_{e,s}^{\rm{lin}}  )   {\rm B}_{y_0} \,\big) \\&+ \,\Big(h-{\rm K}\,\dfrac{h^3}{12}\Big)\,
W_{\mathrm{curv}}\big(  \mathcal{K}_{e,s}^{\rm{lin}}  \big)    +  \Big(\dfrac{h^3}{12}\,-{\rm K}\,\dfrac{h^5}{80}\Big)\,
W_{\mathrm{curv}}\big(  \mathcal{K}_{e,s} ^{\rm{lin}}   {\rm B}_{y_0} \,  \big)  + \,\dfrac{h^5}{80}\,\,
W_{\mathrm{curv}}\big(  \mathcal{K}_{e,s}^{\rm{lin}}    {\rm B}_{y_0}^2  \big)
\Big] {\rm det}(\nabla y_0|n_0)     d a\notag\\ &- {\Pi}^{\rm lin}(v,\vartheta)\,,\notag
\end{align}
where
\begin{align}\label{quadraticformsl}
\mathcal{E}_{m,s}^{\rm{lin}} &:= ( \nabla v - \overline{A}_\vartheta\nabla y_0\; |\; 0)\; [\nabla\Theta \,]^{-1}=\big[( \nabla v\,|\,0) -  \vartheta\times(\nabla y_0\,|\,0) \big]\; [\nabla\Theta \,]^{-1},\notag\\
\mathcal{K}_{e,s}^{\rm{lin}} &:= \Big(\mbox{axl}(\partial_{x_1}\overline{A}_\vartheta)\,|\, \mbox{axl}(\partial_{x_2}\overline{A}_\vartheta)\,|\, 0 \Big) \; [\nabla\Theta \,]^{-1} = \, (\nabla\vartheta\, |\, 0) \; [\nabla\Theta \,]^{-1},
\end{align}
and $\overline{\Pi}^{\rm lin}(v,\vartheta)$ is the linearization of $\overline{\Pi}(m,\overline{Q}_{e,s})$.

For simplicity, we consider only ($\gamma_d=\partial \omega$) the Dirichlet-type homogeneous boundary conditions: \begin{align}\label{boundary2}
v\big|_{\partial \Omega}&=0 \qquad \text{simply supported (fixed, welded)}, \qquad \quad 
\vartheta\big|_{\partial \Omega}=0\ \ (\overline{A}_\vartheta\big|_{\partial \Omega}=0),\ \  \ \ \ \ \ \ \text{clamped},
\end{align}
where the boundary conditions are to be understood in the sense of traces. Therefore,  the admissible set $\mathcal{A}_{\rm lin}$ of solutions $(v,\vartheta)$ is defined by
\begin{equation}\label{21lin}
(v,\vartheta)\in \mathcal{A}_{\rm lin}:={\rm H}^1_0(\omega, \mathbb{R}^3)\times{\rm H}^1_0(\omega, \mathbb{R}^3).
\end{equation}

\subsection{Existence for the linearised  Cosserat shell model}
\subsubsection{Existence result for the linearised $O(h^5)$-Cosserat shell model}
\allowdisplaybreaks
We rewrite the minimization problem \eqref{e89l} in a weak form. For this we define the  operators
\begin{align}\label{operators}
\mathcal{E}^{\rm{lin}}&:\mathcal{A}_{\rm lin}\to \mathbb{R}^{3\times 3},  \qquad \mathcal{E}^{\rm{lin}}(v,\vartheta)\,= \big[( \nabla v\,|\,0) -  \vartheta\times(\nabla y_0\,|\,0) \big]\; [\nabla\Theta \,]^{-1},\notag\\
\mathcal{K}^{\rm{lin}}&:\mathcal{A}_{\rm lin}\to \mathbb{R}^{3\times 3},  \qquad\mathcal{K}^{\rm{lin}} (v,\vartheta)= (\nabla\vartheta\, |\, 0) \; [\nabla\Theta \,]^{-1},
\end{align}
the bilinear form
$
\mathcal{B}^{\rm{lin}}:\mathcal{A}_{\rm lin}\times \mathcal{A}_{\rm lin}\to \mathbb{R},$
\begin{align}
\mathcal{B}^{\rm{lin}}((v,\vartheta),(\widetilde{v},\widetilde{\vartheta})):=\int_{\omega}   \!\!\Big[  &\Big(h+{\rm K}\,\dfrac{h^3}{12}\Big)\,
\mathcal{W}_{\mathrm{shell}}\big(    \mathcal{E}^{\rm{lin}}(v,\vartheta), \mathcal{E}^{\rm{lin}}(\widetilde{v},\widetilde{\vartheta})  \big)\notag\\&+  \Big(\dfrac{h^3}{12}\,-{\rm K}\,\dfrac{h^5}{80}\Big)\,
\mathcal{W}_{\mathrm{shell}}  \big(   \mathcal{E}^{\rm{lin}}(v,\vartheta)  \, {\rm B}_{y_0} +   {\rm C}_{y_0} \mathcal{K}^{\rm{lin}}(v,\vartheta), \mathcal{E}^{\rm{lin}}(\widetilde{v},\widetilde{\vartheta})  \, {\rm B}_{y_0} +   {\rm C}_{y_0} \mathcal{K}^{\rm{lin}}(\widetilde{v},\widetilde{\vartheta})  \big)  \notag\\&
-\dfrac{h^3}{6} \mathrm{ H}\,\mathcal{W}_{\mathrm{shell}}  \big(  \mathcal{E}^{\rm{lin}}(v,\vartheta)  ,
\mathcal{E}^{\rm{lin}}(\widetilde{v},\widetilde{\vartheta}) {\rm B}_{y_0}+{\rm C}_{y_0}\, \mathcal{K}^{\rm{lin}}(\widetilde{v},\widetilde{\vartheta})  \big)\notag\\
&-\dfrac{h^3}{6} \mathrm{ H}\,\mathcal{W}_{\mathrm{shell}}  \big(  \mathcal{E}^{\rm{lin}}(\widetilde{v},\widetilde{\vartheta})  ,
\mathcal{E}^{\rm{lin}}(v,\vartheta) {\rm B}_{y_0}+{\rm C}_{y_0}\, \mathcal{K}^{\rm{lin}}(v,\vartheta) \big)\notag\\&+
\dfrac{h^3}{12}\, \mathcal{W}_{\mathrm{shell}}  \big(  \mathcal{E}^{\rm{lin}}(v,\vartheta)  ,
( \mathcal{E}^{\rm{lin}}(\widetilde{v},\widetilde{\vartheta}) {\rm B}_{y_0}+{\rm C}_{y_0}\, \mathcal{K}^{\rm{lin}}(\widetilde{v},\widetilde{\vartheta}) ){\rm B}_{y_0} \big)\notag
\\&+
\dfrac{h^3}{12}\, \mathcal{W}_{\mathrm{shell}}  \big(  \mathcal{E}^{\rm{lin}}(\widetilde{v},\widetilde{\vartheta})  ,
( \mathcal{E}^{\rm{lin}}(v,\vartheta) {\rm B}_{y_0}+{\rm C}_{y_0}\, \mathcal{K}^{\rm{lin}}(v,\vartheta) ){\rm B}_{y_0} \big)\\&+ \,\dfrac{h^5}{80}\,\,
\mathcal{W}_{\mathrm{mp}} \big((  \mathcal{E}^{\rm{lin}}(v,\vartheta)  \, {\rm B}_{y_0} +  {\rm C}_{y_0} \mathcal{K}^{\rm{lin}}(v,\vartheta)  )   {\rm B}_{y_0} ,(  \mathcal{E}^{\rm{lin}}(\widetilde{v},\widetilde{\vartheta})  \, {\rm B}_{y_0} +  {\rm C}_{y_0} \mathcal{K}^{\rm{lin}}(\widetilde{v},\widetilde{\vartheta})  )   {\rm B}_{y_0}\big)\notag \\&+ \,\Big(h-{\rm K}\,\dfrac{h^3}{12}\Big)\,
\mathcal{W}_{\mathrm{curv}}\big(  \mathcal{K}^{\rm{lin}}(v,\vartheta), \mathcal{K}^{\rm{lin}}(\widetilde{v},\widetilde{\vartheta})  \big)  \notag \\& +  \Big(\dfrac{h^3}{12}\,-{\rm K}\,\dfrac{h^5}{80}\Big)\,
\mathcal{W}_{\mathrm{curv}}\big(  \mathcal{K}^{\rm{lin}}(v,\vartheta)   {\rm B}_{y_0},\mathcal{K}^{\rm{lin}}(\widetilde{v},\widetilde{\vartheta})    {\rm B}_{y_0} \,  \big) \notag \\&+ \,\dfrac{h^5}{80}\,\,
\mathcal{W}_{\mathrm{curv}}\big(  \mathcal{K}^{\rm{lin}}(v,\vartheta)    {\rm B}_{y_0}^2,\mathcal{K}^{\rm{lin}}(\widetilde{v},\widetilde{\vartheta})      {\rm B}_{y_0}^2   \big)
\Big] {\rm det}(\nabla y_0|n_0)     d a\notag.
\end{align}
and the linear operator
\begin{align}
{\Pi}^{\rm lin}:\mathcal{A}_{\rm lin}\to\mathbb{R}, \qquad \qquad {\Pi}^{\rm lin}(\widetilde{v},\widetilde{\vartheta})=\overline{\Pi}^{\rm lin}(\widetilde{v},\widetilde{\vartheta}).
\end{align}
Then the weak form of the equilibrium problem of the linear theory of Cosserat shells including terms up to order $O(h^5)$ is to find
$(u,\vartheta)\in\mathcal{A}_{\rm lin}$ satisfying 
\begin{align}\label{wfproblem}
\mathcal{B}^{\rm{lin}}((v,\vartheta),(\widetilde{v},\widetilde{\vartheta}))={\Pi}^{\rm lin}(\widetilde{v},\widetilde{\vartheta}) \qquad \forall (\widetilde{v},\widetilde{\vartheta})\in\mathcal{A}_{\rm lin}.
\end{align}

\begin{theorem}\label{th100}{\rm [Existence result for the linear theory including terms up to order $O(h^5)$]}
	Assume that the linear operator
	$
	{\Pi}^{\rm lin}$ is bounded and
	that the following conditions concerning the initial configuration are satisfied: $\,y_0:\omega\subset\mathbb{R}^2\rightarrow\mathbb{R}^3$ is a continuous injective mapping and
	\begin{align}\label{26lin}
	{y}_0&\in{\rm H}^1(\omega ,\mathbb{R}^3),\qquad \qquad
	\nabla_x\Theta(0)\in {\rm L}^\infty(\omega ,\mathbb{R}^{3\times 3}),\qquad\qquad \det[\nabla_x\Theta(0)] \geq\, a_0 >0\,,
	\end{align}
	where $a_0$ is a constant.
	Then, for sufficiently small values of the thickness $h$ such that 
	\begin{align}\label{rcondh5int1}
	h\max\{\sup_{x\in\omega}|\kappa_1|, \sup_{x\in\omega}|\kappa_2|\}<\alpha \qquad \text{with}\qquad  \alpha<\sqrt{\frac{2}{3}(29-\sqrt{761})}\simeq 0.97083
	\end{align}
	and for constitutive coefficients  such that $\mu>0, \,\mu_{\rm c}>0$, $2\,\lambda+\mu> 0$, $b_1>0$, $b_2>0$ and $b_3>0$, the  problem \eqref{wfproblem} admits a unique solution
	$(u,\vartheta)\in  \mathcal{A}_{\rm lin}$.
\end{theorem}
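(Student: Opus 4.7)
The plan is to apply the Lax-Milgram theorem on the Hilbert space $\mathcal{A}_{\rm lin}={\rm H}^1_0(\omega;\mathbb{R}^3)\times{\rm H}^1_0(\omega;\mathbb{R}^3)$ equipped with the natural product norm. The bilinear form $\mathcal{B}^{\rm lin}$ is manifestly symmetric by construction (each of the three quadratic forms $W_{\mathrm{shell}}$, $W_{\mathrm{mp}}$, $W_{\mathrm{curv}}$ has been polarised into a symmetric bilinear object), and the linear functional $\Pi^{\rm lin}$ is bounded by hypothesis. So the two substantive tasks are to verify (i) continuity of $\mathcal{B}^{\rm lin}$, and (ii) coercivity of $\mathcal{B}^{\rm lin}$ on $\mathcal{A}_{\rm lin}$.

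For (i), I would exploit that under the assumptions \eqref{26lin} together with the smoothness of $y_0$, all the geometric quantities $\nabla\Theta$, $[\nabla\Theta]^{-1}$, $n_0$, $\mathrm{A}_{y_0}$, $\mathrm{B}_{y_0}$, $\mathrm{C}_{y_0}$, $\mathrm{H}$, $\mathrm{K}$ and $\det(\nabla y_0|n_0)$ lie in $L^\infty(\omega)$. Since the operators $\mathcal{E}^{\rm lin}$ and $\mathcal{K}^{\rm lin}$ are linear in $(v,\vartheta)$ and involve only $\nabla v$, $\vartheta$, $\nabla\vartheta$ multiplied by such bounded coefficients, the Cauchy-Schwarz inequality directly yields
\[
|\mathcal{B}^{\rm lin}((v,\vartheta),(\widetilde v,\widetilde\vartheta))|\,\leq\, C\,\|(v,\vartheta)\|_{\mathcal{A}_{\rm lin}}\,\|(\widetilde v,\widetilde\vartheta)\|_{\mathcal{A}_{\rm lin}}.
\]

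The real work lies in (ii). Since the quadratic form underlying $\mathcal{B}^{\rm lin}$ is identical in structure to the energy density of Proposition \ref{propcoerh5}, with $\mathcal{E}_{m,s}$ and $\mathcal{K}_{e,s}$ replaced by their linearisations, and since the thickness condition \eqref{rcondh5int1} coincides with \eqref{rcondh5}, I invoke that proposition to obtain the pointwise bound
\[
\mathcal{B}^{\rm lin}((v,\vartheta),(v,\vartheta))\,\geq\, c\int_\omega\bigl(\|\mathcal{E}^{\rm lin}(v,\vartheta)\|^2+\|\mathcal{K}^{\rm lin}(v,\vartheta)\|^2\bigr)\det(\nabla y_0|n_0)\,da.
\]
I then pass from this strain-level coercivity to $\mathcal{A}_{\rm lin}$-coercivity. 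Multiplying \eqref{equ7} by $\nabla\Theta$ on the right, one reads off $\partial_{x_1}\vartheta$ and $\partial_{x_2}\vartheta$ as the first two columns of $\mathcal{K}^{\rm lin}\nabla\Theta$, whence $\|\nabla\vartheta\|^2\leq C\|\mathcal{K}^{\rm lin}\|^2$ pointwise. The same manipulation applied to \eqref{equ2} gives $\|\nabla v-\vartheta\times\nabla y_0\|^2\leq C\|\mathcal{E}^{\rm lin}\|^2$, and the triangle inequality together with the $L^\infty$-bound on $\nabla y_0$ produces $\|\nabla v\|^2\leq C(\|\mathcal{E}^{\rm lin}\|^2+\|\vartheta\|^2)$. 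Because $v$ and $\vartheta$ vanish on $\partial\omega$, the Poincar\'e inequality controls $\|v\|_{L^2}^2$ and $\|\vartheta\|_{L^2}^2$ by the respective gradient norms, and the lower bound $\det(\nabla y_0|n_0)\geq a_0>0$ preserves all these inequalities after integration. Assembling them yields $\mathcal{B}^{\rm lin}((v,\vartheta),(v,\vartheta))\geq\widetilde c\,(\|v\|^2_{{\rm H}^1}+\|\vartheta\|^2_{{\rm H}^1})$, and Lax-Milgram delivers existence and uniqueness.

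The main obstacle is entirely absorbed by Proposition \ref{propcoerh5}, which I would invoke as a black box: the sign-indefinite cross terms of order $h^3$ and $h^5$ coupling membrane and bending in $\mathcal{B}^{\rm lin}$ must be dominated by the positive-definite diagonal contributions, and it is precisely this absorption that forces the thickness restriction \eqref{rcondh5int1}. Once strain coercivity is on the table, the passage to $\mathcal{A}_{\rm lin}$-coercivity is essentially algebraic, thanks to the explicit form of $\mathcal{E}^{\rm lin}$ and $\mathcal{K}^{\rm lin}$ and the homogeneous Dirichlet conditions which render a more delicate Korn-type argument for shells unnecessary in this setting.
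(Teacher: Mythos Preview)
Your proposal is correct and follows the same overall architecture as the paper: apply Lax--Milgram, invoke Proposition~\ref{propcoerh5} (which holds verbatim for the linearised strain measures since the energy density is the same quadratic form) to obtain coercivity in $\lVert\mathcal{E}^{\rm lin}\rVert_{L^2}^2+\lVert\mathcal{K}^{\rm lin}\rVert_{L^2}^2$, and then establish a Korn-type inequality passing from the strain norms to the $H^1_0$-norm of $(v,\vartheta)$. The only difference is in the execution of this last step. The paper lower-bounds $\lVert\mathcal{E}^{\rm lin}\rVert^2\geq\lambda_0\lVert\nabla v-\overline{A}_\vartheta\nabla y_0\rVert^2$ via the smallest eigenvalue of $\widehat{\rm I}_{y_0}^{-1}$, then uses a Young inequality with parameter $\varepsilon$ to separate $\nabla v$ from $\overline{A}_\vartheta\nabla y_0$, and finally tunes $\varepsilon$ against the Poincar\'e constant so that the negative contribution is absorbed. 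You instead right-multiply by $\nabla\Theta\in L^\infty$ to recover $(\nabla v-\vartheta\times\nabla y_0\,|\,0)$ and $(\nabla\vartheta\,|\,0)$ directly, then use the triangle inequality and Poincar\'e on $\vartheta$ to control $\lVert\nabla v\rVert_{L^2}$. Both arguments rely on the same ingredients (boundedness of the geometric data, Poincar\'e for the homogeneous Dirichlet data); yours avoids the $\varepsilon$-optimisation and is marginally more transparent, while the paper's eigenvalue formulation makes the constants slightly more explicit.
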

\begin{proof}
	The proof is based on the Lax-Milgram theorem.  Since  the thickness $h$ satisfies \eqref{rcondh5int1}, then due to the results established in \cite{GhibaNeffPartII,GhibaNeffPartIII} for the geometrically nonlinear model, the internal energy is automatically coercive in terms of the linearised strain and curvature variables  $\mathcal{E}^{\rm{lin}}(v,\vartheta),\mathcal{K}^{\rm{lin}}(v,\vartheta)\in {\rm L^2}(\omega, \mathbb{R}^{3\times 3}) $, in the sense that for all $(v,\vartheta)\in\mathcal{A}$  there exists a constant   $a_1^+>0$  such that
	\begin{align}	\mathcal{B}^{\rm{lin}}((v,\vartheta),({v},{\vartheta}))\geq\, a_1^+\, \big( \lVert \mathcal{E}^{\rm{lin}}(v,\vartheta)\rVert ^2_{{\rm L}^2(\omega)} + \lVert \mathcal{K}^{\rm{lin}}(v,\vartheta)\rVert ^2_{{\rm L}^2(\omega)}\,\big),
	\end{align}
	where
	$a_1^+$ depends on the given constitutive coefficients.
	
	Since  the bilinear form $\mathcal{B}^{\rm lin}$ is bounded on $\mathcal{A}_{\rm lin}$, as well as   the linear operator $
	{\Pi}^{\rm lin}$, it remains to prove that the bilinear form $\mathcal{B}^{\rm lin}$ is coercive on $\mathcal{A}_{\rm lin}$. To this aim, it only remains  to prove 
	that for all $(v,\vartheta)\in\mathcal{A}_{\rm lin}$  there exists $c>0$ such that
	\begin{align}
	\lVert \mathcal{E}^{\rm{lin}}(v,\vartheta)\rVert ^2_{{\rm L}^2(\omega)} + \lVert \mathcal{K}^{\rm{lin}}(v,\vartheta)\rVert ^2_{{\rm L}^2(\omega)}\geq c\,(\lVert  v\rVert ^2_{{\rm H}^1_0(\omega)} + \lVert  \vartheta\rVert ^2_{{\rm H}^1_0(\omega)}
	).
	\end{align}
	
	The first step is to use the following alternative form of the  linearised strain measures, i.e.,
	\begin{align} \mathcal{E}^{\rm{lin}}(v,\vartheta)=  ( \nabla v - \overline{A}_\vartheta\nabla y_0\; |\; 0)\; [\nabla\Theta \,]^{-1},
	\qquad \text{
		where}\qquad 
	\overline{A}_\vartheta={\rm Anti}\, \vartheta.
	\end{align}
	The next step is to remark that, because the lifted $3\times 3$ quantity
	$
	\widehat{\rm I}_{y_0}\:\,=\,({\nabla  y_0}|n_0)^T({\nabla  y_0}|n_0)\in {\rm Sym}(3)$ is positive definite and  also it's inverse is positive definite, using also the Cauchy–Schwarz inequality and the inequality of arithmetic and geometric means we obtain
	\begin{align}\label{estE}
	\lVert \mathcal{E}^{\rm{lin}}(v,\vartheta)\rVert ^2&=\bigl\langle   ( \nabla v - \overline{A}_\vartheta\nabla y_0\; |\; 0)\; [\nabla\Theta \,]^{-1}, ( \nabla v - \overline{A}_\vartheta\nabla y_0\; |\; 0)\; [\nabla\Theta \,]^{-1}\bigr\rangle\notag\\
	&=\bigl\langle   ( \nabla v - \overline{A}_\vartheta\nabla y_0\; |\; 0)\; [\nabla\Theta \,]^{-1}[\nabla\Theta \,]^{-T}, ( \nabla v - \overline{A}_\vartheta\nabla y_0\; |\; 0)\; \bigr\rangle\notag\\&=\bigl\langle[\nabla\Theta \,]^{-1}[\nabla\Theta \,]^{-T} ( \nabla v - \overline{A}_\vartheta\nabla y_0\; |\; 0)^T, ( \nabla v - \overline{A}_\vartheta\nabla y_0\; |\; 0)^T \bigr\rangle\notag\\&=\bigl\langle\widehat{\rm I}_{y_0}^{-1} ( \nabla v - \overline{A}_\vartheta\nabla y_0\; |\; 0)^T, ( \nabla v - \overline{A}_\vartheta\nabla y_0\; |\; 0)^T \bigr\rangle\\
	&\geq \lambda_0\bigl\langle ( \nabla v - \overline{A}_\vartheta\nabla y_0\; |\; 0), ( \nabla v - \overline{A}_\vartheta\nabla y_0\; |\; 0) \bigr\rangle= \lambda_0\bigl\langle  \nabla v - \overline{A}_\vartheta\nabla y_0,  \nabla v - \overline{A}_\vartheta\nabla y_0\; \bigr\rangle\notag\\
	&\geq \lambda_0\Big[\lVert  \nabla v\rVert^2 + \lVert\overline{A}_\vartheta\nabla y_0\rVert^2-2\,\bigl\langle  \nabla v , \overline{A}_\vartheta\nabla y_0\; \bigr\rangle\Big]
	\geq \lambda_0\Big[(1-\varepsilon)\lVert  \nabla v\rVert^2 +\left(1-\frac{1}{\varepsilon}\right) \lVert\overline{A}_\vartheta\nabla y_0\rVert^2 \Big],\notag
	\end{align}
	for all $\varepsilon>0$,  where $0<\lambda_0<1$ is the smallest eigenvalue of the positive definite matrix $\widehat{\rm I}^{-1}_{y_0}$, over $\omega$.
	
	Similarly, we deduce that
	\begin{align}\label{auxine1}
	\lVert\mathcal{K}^{\rm{lin}}(v,\vartheta)\rVert^2&=\, \lVert(\nabla\vartheta\, |\, 0) \; [\nabla\Theta \,]^{-1}\rVert^2 \geq \lambda_0\, \lVert(\nabla\vartheta\, |\, 0) \rVert^2=\lambda_0\, \lVert\nabla\vartheta \rVert^2=\frac{1}{2}\lambda_0\,\sum_{\alpha=1,2} \lVert {\rm Anti} \, \partial_{x_\alpha}\vartheta  \rVert^2\\
	&=\frac{1}{2}\lambda_0\,\sum_{\alpha=1,2} \lVert \, \partial_{x_\alpha} {\rm Anti}\, \vartheta  \rVert^2=\frac{1}{2}\lambda_0\,\sum_{\alpha=1,2} \lVert \, \partial_{x_\alpha} \overline{A}_\vartheta  \rVert^2.\notag
	\end{align}
	Moreover, since $\overline{A}_\vartheta \in {\rm H}_0^1(\omega;\mathfrak{so}(3))$, we deduce the  estimate
	\begin{align}\label{auxine2}
	\sum_{\alpha=1,2} \lVert\partial_{x_\alpha} \overline{A}_\vartheta  \rVert^2_{{\rm L}^2(\omega)} &\geq c_p \lVert  \overline{A}_\vartheta  \rVert^2_{{\rm L}^2(\omega)},
	\end{align}
	where $c_p>0$ is the constant depending only on $\omega$ coming from the Poincar\'{e} inequality.  Because the Frobenius norm is sub-multiplicative, we have
\begin{align}
	\lVert  \overline{A}_\vartheta  \nabla y_0\rVert^2\leq \lVert  \overline{A}_\vartheta \rVert^2\,\lVert \nabla y_0\rVert^2\leq \lVert  \overline{A}_\vartheta \rVert^2	\sup_{(x_1,x_2)\in {\omega}} \,\lVert \nabla y_0\rVert^2
	\end{align}
	and therefore
	\begin{align}\label{auxin}	
	\lVert  \overline{A}_\vartheta  \nabla y_0\rVert^2_{{\rm L}^2(\omega)}\leq \lVert  \overline{A}_\vartheta \rVert^2_{{\rm L}^2(\omega)}\,\sup_{(x_1,x_2)\in {\omega}} \,\lVert \nabla y_0\rVert^2.
	\end{align}\newpage
Using \eqref{auxine1}, \eqref{auxine2} and \eqref{auxin} 	we deduce
	\begin{align}\label{estK}
	\lVert\mathcal{K}^{\rm{lin}}(v,\vartheta)\rVert^2_{{\rm L}^2(\omega)}&=\frac{1}{2}\,\lVert\mathcal{K}^{\rm{lin}}(v,\vartheta)\rVert^2_{{\rm L}^2(\omega)}+\frac{1}{2}\,\lVert\mathcal{K}^{\rm{lin}}(v,\vartheta)\rVert^2_{{\rm L}^2(\omega)}\notag\\& \geq\frac{1}{4}\lambda_0\,\sum_{\alpha=1,2} \lVert \, \partial_{x_\alpha} \overline{A}_\vartheta  \rVert^2+\,\frac{c_p\,\lambda_0}{4} \lVert  \overline{A}_\vartheta  \rVert^2_{{\rm L}^2(\omega)}\notag\\& \geq\frac{1}{4}\lambda_0\,\sum_{\alpha=1,2} \lVert \, \partial_{x_\alpha} \overline{A}_\vartheta  \rVert^2+\,\frac{c_p\,\lambda_0}{4} \frac{1}{\sup_{(x_1,x_2)\in {\omega}}\lVert   \nabla y_0  \rVert^2}\lVert  \overline{A}_\vartheta  \nabla y_0\rVert^2_{{\rm L}^2(\omega)} \\&=\frac{1}{4}\lambda_0\,\sum_{\alpha=1,2} \lVert \, \partial_{x_\alpha} \overline{A}_\vartheta  \rVert^2+\frac{c_p\,\lambda_0}{4\,\sup_{(x_1,x_2)\in {\omega}}\tr({\rm I}_{y_0})}\lVert  \overline{A}_\vartheta  \nabla y_0\rVert^2_{{\rm L}^2(\omega)}\notag \\&\geq\frac{1}{4}\lambda_0\,\sum_{\alpha=1,2} \lVert \, \partial_{x_\alpha} \overline{A}_\vartheta  \rVert^2+ \frac{c_p\,\lambda_0}{8\,\lambda_M}\lVert  \overline{A}_\vartheta  \nabla y_0\rVert^2_{{\rm L}^2(\omega)},\notag
	\end{align}
	where $\lambda_M>1$ is the largest eigenvalue of the given  positive definite matrix $\widehat{\rm I}_{y_0}$, over $\omega$.
	Hence with  \eqref{estK} we have
	\interdisplaylinepenalty=10000
	\begin{align}
	\lVert \mathcal{E}^{\rm{lin}}(v,\vartheta)\rVert ^2_{{\rm L}^2(\omega)} + \lVert \mathcal{K}^{\rm{lin}}(v,\vartheta)\rVert ^2_{{\rm L}^2(\omega)} 
	&\geq \lambda_0\Big[(1-\varepsilon)\lVert  \nabla v\rVert^2_{{\rm L}^2(\omega)} +\left(1-\frac{1}{\varepsilon} +\frac{c_p}{8\,\lambda_M}\right)\lVert  \overline{A}_\vartheta  \nabla y_0\rVert^2_{{\rm L}^2(\omega)}\\&\quad \qquad \quad +\frac{1}{2}\,\sum_{\alpha=1,2} \lVert \, \partial_{x_\alpha} \overline{A}_\vartheta  \rVert^2 _{{\rm L}^2(\omega)}\Big]\notag
	\end{align}
	for all $\varepsilon>0$. Now we are looking for some $\varepsilon>0$ such that in parallel 
	\begin{align}
	1-\varepsilon>0 \qquad \text{and}\qquad 1-\frac{1}{\varepsilon} +\frac{c_p}{8\,\tr({\rm I}_{y_0})}>0, \qquad 
	\text{i.e.},
\qquad 1>\varepsilon>\frac{1}{1 +\frac{c_p}{8\,\tr({\rm I}_{y_0})}}.
	\end{align}
	Since, $c_p>0$ and $\lambda_M>0$, such a constant $\varepsilon>0$ exists and there is a positive constant $c_1>0$ such that the following Korn-type inequality, see  \cite{Birsan08} for some related Korn-type inequalities in the Cosserat theory  with a deformable vector, is satisfied
	\begin{align}
	\lVert \mathcal{E}^{\rm{lin}}(v,\vartheta)\rVert ^2_{{\rm L}^2(\omega)}  + \lVert \mathcal{K}^{\rm{lin}}(v,\vartheta)\rVert ^2_{{\rm L}^2(\omega)} 
	&\geq c_1\Big[\lVert  \nabla v\rVert^2_{{\rm L}^2(\omega)}  +\lVert  \overline{A}_\vartheta  \nabla y_0\rVert^2_{{\rm L}^2(\omega)} +\lVert  \nabla \overline{A}_\vartheta  \rVert^2_{{\rm L}^2(\omega)}\Big]\notag\\
	&\geq c_1\Big[\lVert  \nabla v\rVert^2_{{\rm L}^2(\omega)}  +\lVert \nabla  {\rm Anti}\,  \vartheta  \rVert^2_{{\rm L}^2(\omega)}\Big],
	\end{align}
	i.e., a positive constant $c>0$ such that 
	\begin{align}
	\lVert \mathcal{E}^{\rm{lin}}(v,\vartheta)\rVert ^2_{{\rm L}^2(\omega)}  + \lVert \mathcal{K}^{\rm{lin}}(v,\vartheta)\rVert ^2_{{\rm L}^2(\omega)} 
	&\geq c\Big[\lVert  \nabla v\rVert^2_{{\rm L}^2(\omega)}  +\lVert \nabla \vartheta  \rVert^2_{{\rm L}^2(\omega)}\Big]\qquad \forall\  (v,\vartheta)\in \mathcal{A}^{\rm lin}.
	\end{align}
	Hence, the bilinear form $\mathcal{B}^{\rm lin}$ is coercive and the Lax-Milgram theorem leads to the conclusion of the theorem.
\end{proof}

\subsubsection{Existence result for the linearised truncated $O(h^3)$- Cosserat shell model}

As a restricted case of the linear Cosserat $O(h^5)$-shell model  we obtain the linear Cosserat $O(h^3)$-shell model, by ignoring the terms of order $O(h^5)$, i.e. the variational problem is  to find a midsurface displacement vector field 
$v:\omega\subset\mathbb{R}^2\to\mathbb{R}^3$ and the micro-rotation vector field $\vartheta:\omega\subset\mathbb{R}^2\to\mathbb{R}^3$ minimizing on $\omega$:
\begin{align}\label{e89lh3}
I(v,\vartheta)\!=\!\! \int_{\omega}   \!\!\Big[  &\Big(h+{\rm K}\,\dfrac{h^3}{12}\Big)\,
W_{\mathrm{shell}}\big(    \mathcal{E}_{m,s}^{\rm{lin}}  \big)+  \dfrac{h^3}{12}\,
W_{\mathrm{shell}}  \big(   \mathcal{E}_{m,s}^{\rm{lin}}  \, {\rm B}_{y_0} +   {\rm C}_{y_0} \mathcal{K}_{e,s}^{\rm{lin}}  \big)  \notag \\&
-\dfrac{h^3}{3} \mathrm{ H}\,\mathcal{W}_{\mathrm{shell}}  \big(  \mathcal{E}_{m,s}^{\rm{lin}}  ,
\mathcal{E}_{m,s}^{\rm{lin}} {\rm B}_{y_0}+{\rm C}_{y_0}\, \mathcal{K}_{e,s}^{\rm{lin}}  \big)+
\dfrac{h^3}{6}\, \mathcal{W}_{\mathrm{shell}}  \big(  \mathcal{E}_{m,s}^{\rm{lin}}  ,
( \mathcal{E}_{m,s}^{\rm{lin}} {\rm B}_{y_0}+{\rm C}_{y_0}\, \mathcal{K}_{e,s}^{\rm{lin}} ){\rm B}_{y_0} \big) \\&+ \,\Big(h-{\rm K}\,\dfrac{h^3}{12}\Big)\,
W_{\mathrm{curv}}\big(  \mathcal{K}_{e,s}^{\rm{lin}}  \big)    +  \dfrac{h^3}{12}\,
W_{\mathrm{curv}}\big(  \mathcal{K}_{e,s} ^{\rm{lin}}   {\rm B}_{y_0} \,  \big) 
\Big] {\rm det}(\nabla y_0|n_0)     d a- \overline{\Pi}^{\rm lin}(v,\vartheta)\,,\notag
\end{align}
where
$\overline{\Pi}^{\rm lin}(v,\vartheta)$ is the linearization of $\overline{\Pi}(m,\overline{Q}_{e,s})$. The weak form  of the equilibrium problem of the linear theory of Cosserat shells including terms up to order $O(h^3)$ is to find
$(v,\vartheta)\in\mathcal{A}_{\rm lin}$ satisfying 
\begin{align}\label{wfproblemh3}
\mathcal{B}^{\rm{lin}}_{h^3 }((v,\vartheta),(\widetilde{v},\widetilde{\vartheta}))={\Pi}^{\rm lin}(\widetilde{v},\widetilde{\vartheta}) \qquad \forall\, (\widetilde{v},\widetilde{\vartheta})\in\mathcal{A}_{\rm lin},
\end{align}
where $\mathcal{B}_{h^3 }^{\rm{lin}}:\mathcal{A}_{\rm lin}\times \mathcal{A}_{\rm lin}\to \mathbb{R},$
\begin{align}
\mathcal{B}_{h^3 }^{\rm{lin}}((v,\vartheta),(\widetilde{v},\widetilde{\vartheta})):=\int_{\omega}   \!\!\Big[  &\Big(h+{\rm K}\,\dfrac{h^3}{12}\Big)\,
\mathcal{W}_{\mathrm{shell}}\big(    \mathcal{E}^{\rm{lin}}(v,\vartheta), \mathcal{E}^{\rm{lin}}(\widetilde{v},\widetilde{\vartheta})  \big)\notag\\&+  \dfrac{h^3}{12}\,
\mathcal{W}_{\mathrm{shell}}  \big(   \mathcal{E}^{\rm{lin}}(v,\vartheta)  \, {\rm B}_{y_0} +   {\rm C}_{y_0} \mathcal{K}^{\rm{lin}}(v,\vartheta), \mathcal{E}^{\rm{lin}}(\widetilde{v},\widetilde{\vartheta})  \, {\rm B}_{y_0} +   {\rm C}_{y_0} \mathcal{K}^{\rm{lin}}(\widetilde{v},\widetilde{\vartheta})  \big)  \notag\\&
-\dfrac{h^3}{6} \mathrm{ H}\,\mathcal{W}_{\mathrm{shell}}  \big(  \mathcal{E}^{\rm{lin}}(v,\vartheta)  ,
\mathcal{E}^{\rm{lin}}(\widetilde{v},\widetilde{\vartheta}) {\rm B}_{y_0}+{\rm C}_{y_0}\, \mathcal{K}^{\rm{lin}}(\widetilde{v},\widetilde{\vartheta})  \big)\notag \\
&-\dfrac{h^3}{6} \mathrm{ H}\,\mathcal{W}_{\mathrm{shell}}  \big(  \mathcal{E}^{\rm{lin}}(\widetilde{v},\widetilde{\vartheta})  ,
\mathcal{E}^{\rm{lin}}(v,\vartheta) {\rm B}_{y_0}+{\rm C}_{y_0}\, \mathcal{K}^{\rm{lin}}(v,\vartheta) \big)\notag\\&+
\dfrac{h^3}{12}\, \mathcal{W}_{\mathrm{shell}}  \big(  \mathcal{E}^{\rm{lin}}(v,\vartheta)  ,
( \mathcal{E}^{\rm{lin}}(\widetilde{v},\widetilde{\vartheta}) {\rm B}_{y_0}+{\rm C}_{y_0}\, \mathcal{K}^{\rm{lin}}(\widetilde{v},\widetilde{\vartheta}) ){\rm B}_{y_0} \big)
\\&+
\dfrac{h^3}{12}\, \mathcal{W}_{\mathrm{shell}}  \big(  \mathcal{E}^{\rm{lin}}(\widetilde{v},\widetilde{\vartheta})  ,
( \mathcal{E}^{\rm{lin}}(v,\vartheta) {\rm B}_{y_0}+{\rm C}_{y_0}\, \mathcal{K}^{\rm{lin}}(v,\vartheta) ){\rm B}_{y_0} \big)\notag \\&+ \,\Big(h-{\rm K}\,\dfrac{h^3}{12}\Big)\,
\mathcal{W}_{\mathrm{curv}}\big(  \mathcal{K}^{\rm{lin}}(v,\vartheta), \mathcal{K}^{\rm{lin}}(\widetilde{v},\widetilde{\vartheta})  \big)  \notag \\& +  \dfrac{h^3}{12}\,
\mathcal{W}_{\mathrm{curv}}\big(  \mathcal{K}^{\rm{lin}}(v,\vartheta)   {\rm B}_{y_0},\mathcal{K}^{\rm{lin}}(\widetilde{v},\widetilde{\vartheta})    {\rm B}_{y_0} \,  \big) 
\Big] {\rm det}(\nabla y_0|n_0)     d a\notag.
\end{align}
and the linear operator
\begin{align}
{\Pi}^{\rm lin}:\mathcal{A}_{\rm lin}\to\mathbb{R}, \qquad\qquad\qquad  {\Pi}^{\rm lin}(\widetilde{v},\widetilde{\vartheta})=\overline{\Pi}^{\rm lin}(\widetilde{v},\widetilde{\vartheta}).
\end{align}

A similar proof as in the case of the $O(h^5)$ model leads us to the following results (see also the proof of the coercivity from \cite{GhibaNeffPartII,GhibaNeffPartIII})
\begin{theorem}\label{th1h3}{\rm [Existence result for the truncated linear theory including terms up to order $O(h^3)$]}
	Assume that the linear operator
	$
	{\Pi}^{\rm lin}$ is bounded and
	that the following conditions concerning the initial configuration are satisfied: $\,y_0:\omega\subset\mathbb{R}^2\rightarrow\mathbb{R}^3$ is a continuous injective mapping and
	\begin{align}\label{26lin2}
	{y}_0&\in{\rm H}^1(\omega ,\mathbb{R}^3),\qquad\qquad\qquad
	\nabla_x\Theta(0)\in {\rm L}^\infty(\omega ,\mathbb{R}^{3\times 3}),\qquad\qquad\qquad \det[\nabla_x\Theta(0)] \geq\, a_0 >0\,,
	\end{align}
	where $a_0$ is a constant.
	Then, if the thickness $h$ satisfies either of the following conditions:
	\begin{enumerate}
		\item[i)] $	h\max\{\sup_{x\in\omega}|\kappa_1|, \sup_{x\in\omega}|\kappa_2|\}<\alpha$ \quad {\bf and} \quad  $	h^2<\frac{(5-2\sqrt{6})(\alpha^2-12)^2}{4\, \alpha^2}\frac{ {c_2^+}}{C_1^+}$  \quad  \text{with}  \quad  $\quad 0<\alpha<2\sqrt{3}$;
		\item[ii)] $h\max\{\sup_{x\in\omega}|\kappa_1|, \sup_{x\in\omega}|\kappa_2|\}<\frac{1}{a}$  \quad {\bf and} \quad  $a>\max\Big\{1 + \frac{\sqrt{2}}{2},\frac{1+\sqrt{1+3\frac{C_1^+}{c_1^+}}}{2}\Big\}$,
	\end{enumerate}
	where  $c_2^+$  denotes the smallest eigenvalue  of
	$
	W_{\mathrm{curv}}(  S ),
	$
	and $c_1^+$ and $ C_1^+>0$ denote the smallest and the biggest eigenvalues of the quadratic form $W_{\mathrm{shell}}^{\infty}(  S)$,
	and for constitutive coefficients  such that $\mu>0$, $2\,\lambda+\mu> 0$, $b_1>0$, $b_2>0$, $b_3>0$ and $L_{\rm c}>0$, the  problem \eqref{wfproblemh3} admits a unique solution
	$(u,\vartheta)\in  \mathcal{A}_{\rm lin}$.
\end{theorem}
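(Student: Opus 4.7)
The plan is to apply the Lax--Milgram theorem to the bilinear form $\mathcal{B}^{\rm lin}_{h^3}$ on the Hilbert space $\mathcal{A}_{\rm lin}={\rm H}^1_0(\omega,\mathbb{R}^3)\times{\rm H}^1_0(\omega,\mathbb{R}^3)$, following verbatim the template of the proof of Theorem~\ref{th100}. The only substantive change is that the role played by the $h^5$-coercivity result of Proposition~\ref{propcoerh5} is now taken over by Proposition~\ref{coerh3r}, which is the source of the two alternative smallness conditions (i)--(ii) on the thickness.

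First I would check that $\mathcal{B}^{\rm lin}_{h^3}$ is continuous on $\mathcal{A}_{\rm lin}\times\mathcal{A}_{\rm lin}$. The quadratic forms $W_{\mathrm{shell}}$, $\mathcal{W}_{\mathrm{shell}}$ and $W_{\mathrm{curv}}$ are continuous on $\mathbb{R}^{3\times 3}$, and under the hypotheses \eqref{26lin2} the geometric fields $[\nabla\Theta]^{-1}$, $n_0$, ${\rm B}_{y_0}$, ${\rm C}_{y_0}$, ${\rm H}$, ${\rm K}$ and $\det(\nabla y_0|n_0)$ lie in ${\rm L}^\infty(\omega)$. Consequently, $\mathcal{E}^{\rm{lin}}$ and $\mathcal{K}^{\rm{lin}}$ are bounded linear maps from $\mathcal{A}_{\rm lin}$ into ${\rm L}^2(\omega,\mathbb{R}^{3\times 3})$, and combining these facts gives the required boundedness of $\mathcal{B}^{\rm lin}_{h^3}$. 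Boundedness of ${\Pi}^{\rm lin}$ is part of the hypotheses.

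The main step is coercivity. Under either (i) or (ii), I would apply Proposition~\ref{coerh3r} pointwise in $\omega$, with $\mathcal{E}_{m,s}\leftarrow\mathcal{E}^{\rm{lin}}(v,\vartheta)(x)$ and $\mathcal{K}_{e,s}\leftarrow\mathcal{K}^{\rm{lin}}(v,\vartheta)(x)$, and integrate against $\det(\nabla y_0|n_0)\geq a_0>0$ to obtain a constant $a_1^+>0$ with
\begin{equation*}
\mathcal{B}^{\rm lin}_{h^3}((v,\vartheta),(v,\vartheta))\,\geq\, a_1^+\big(\lVert\mathcal{E}^{\rm{lin}}(v,\vartheta)\rVert^2_{{\rm L}^2(\omega)}+\lVert\mathcal{K}^{\rm{lin}}(v,\vartheta)\rVert^2_{{\rm L}^2(\omega)}\big).
\end{equation*}
From here the argument is word-for-word that of Theorem~\ref{th100}: the chain of estimates \eqref{estE}--\eqref{estK} — which uses only that $\widehat{\rm I}_{y_0}\in{\rm Sym}^+(3)$, the identity $\overline{A}_\vartheta={\rm Anti}\,\vartheta$, sub-multiplicativity of the Frobenius norm, and Poincar\'e on ${\rm H}^1_0(\omega;\mathfrak{so}(3))$ — furnishes a constant $c>0$ with $\lVert\mathcal{E}^{\rm{lin}}\rVert^2_{{\rm L}^2(\omega)}+\lVert\mathcal{K}^{\rm{lin}}\rVert^2_{{\rm L}^2(\omega)}\geq c(\lVert\nabla v\rVert^2_{{\rm L}^2(\omega)}+\lVert\nabla\vartheta\rVert^2_{{\rm L}^2(\omega)})$, and one further application of Poincar\'e on ${\rm H}^1_0(\omega,\mathbb{R}^3)$ upgrades this to coercivity in the full ${\rm H}^1$-norm on $\mathcal{A}_{\rm lin}$. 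Lax--Milgram then produces the unique solution.

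The hard part is not the Hilbertian machinery but the sign-indefinite $h^3$ coupling terms $-\tfrac{h^3}{3}{\rm H}\,\mathcal{W}_{\mathrm{shell}}(\mathcal{E}_{m,s},\mathcal{E}_{m,s}{\rm B}_{y_0}+{\rm C}_{y_0}\mathcal{K}_{e,s})$ and $\tfrac{h^3}{6}\mathcal{W}_{\mathrm{shell}}(\mathcal{E}_{m,s},(\mathcal{E}_{m,s}{\rm B}_{y_0}+{\rm C}_{y_0}\mathcal{K}_{e,s}){\rm B}_{y_0})$; controlling them is precisely the content of Proposition~\ref{coerh3r}, where the two alternative thickness bounds (i) and (ii) arise from two different strategies for absorbing the indefinite cross terms into the positive pieces $W_{\mathrm{shell}}(\mathcal{E}_{m,s})$ and $W_{\mathrm{curv}}(\mathcal{K}_{e,s})$. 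Since that technical work is done upstream, the remainder of the existence proof is purely structural, and the novelty compared with the $h^5$ case is confined entirely to the enlarged admissible thickness window.
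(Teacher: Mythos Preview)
Your proposal is correct and follows exactly the paper's own proof, which simply invokes Proposition~\ref{coerh3r} for the pointwise coercivity and then repeats the Korn-type steps of Theorem~\ref{th100} to conclude via Lax--Milgram. You have written out in detail what the paper compresses into one sentence, but the structure and ingredients are identical.
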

\begin{proof}
	The proof is based on the coercivity inequality already proven in \cite{GhibaNeffPartII,GhibaNeffPartIII}, see Proposition \ref{coerh3r}, and the same steps as in the proof of Theorem \ref{th100}.
\end{proof}

\begin{remark}
	We observe that  the conditions imposed on the thickness are more restrictive in the truncated $O(h^3)$ model than in the $O(h^5)$ model. In other words, the existence results hold true in $O(h^3)$ only for smaller values of the thickness $h$, in comparison to  the $O(h^5)$ model. Moreover, while in the $O(h^5)$ model the conditions imposed on the thickness are independent on the constitutive coefficients (the same  conditions for all shells, i.e., all  materials), in the $O(h^3)$ model the conditions  depend on the assumed  constitutive coefficients.
\end{remark}

\section{The linear model for the Cosserat flat shell: a consistency check}\setcounter{equation}{0}

For the Cosserat flat shell model (flat initial configuration) obtained in \cite{Neff_plate04_cmt} we have $\Theta(x_1,x_2,x_3)=(x_1,x_2,x_3)$ which gives $\nabla_x\Theta=\id_3$ and $y_0(x_1,x_2)=(x_1,x_2):={\text{id}}(x_1,x_2)$. Also $Q_0=\id_3$, $n_0=e_3$, $\overline{Q}_{e,0}(x_1,x_2)=\overline{R}(x_1,x_2)$, ${\rm B}_{\rm id}\,=\,0_3,$ ${\rm C}_{\rm id}\,=\,\begin{footnotesize}\begin{pmatrix}
			0&1&0 \\
			-1&0&0 \\
			0&0&0
\end{pmatrix}\end{footnotesize}\in \mathfrak{so}(3),$ $
{\rm L}_{\rm id}=0_2,$ ${\rm K}=0,$ and $
{\rm H}=\,0$.
Hence,  see \eqref{ec11},   $\mathcal{E}_{m,s}^{\rm{lin}}$  and  $\mathcal{K}_{e,s}^{\rm{lin}}$ are
\begin{align}
\mathcal{E}_{m,s}^{\rm{lin,plate}} :&= ( \nabla v |\; 0)-(\overline{A}_\vartheta.\,e_1\;|\;\overline{A}_\vartheta.e_2\; |\; 0)=( \nabla v |\; \overline{A}_\vartheta.e_3)-\overline{A}_\vartheta\\&=( \nabla v\,|\,0) -  \vartheta\times(\,e_1\,|\,e_2\,|\,0)=( \partial_{x_1} v- \vartheta\times e_1\;|\;\partial_{x_2} v- \vartheta\times e_2 \;|\;0)  \notag
\end{align}
and
\begin{align}
\mathcal{K}_{e,s}^{\rm{lin,plate}} &:= \Big(\mbox{axl}(\partial_{x_1}\overline{A}_\vartheta)\,|\, \mbox{axl}(\partial_{x_2}\overline{A}_\vartheta)\,|\, 0 \Big) \;  = \, (\nabla\vartheta\, |\, 0) \; ,
\end{align}
respectively.
Moreover, due to \eqref{ec11}, we deduce
\begin{align}
\mathrm{C}_{y_0}\,\mathcal{K}_{e,s}^{\rm{lin}} &= - e_3\times  (\nabla\vartheta\, |\, 0)\; .
\end{align}

The  variational problem for the linear Cosserat plate model  is  to find a midsurface displacement vector field 
$v:\omega\subset\mathbb{R}^2\to\mathbb{R}^3$ and the micro-rotation vector field $\vartheta:\omega\subset\mathbb{R}^2\to\mathbb{R}^3$ minimizing on $\omega$:
\begin{align}\label{e89lplate}
I(v,\vartheta)\!=\!\! \int_{\omega}   \!\!
\Big[ h\,W_{\mathrm{shell}}\big(    \mathcal{E}_{m,s}^{\rm{lin,plate}}  \big)
+  \dfrac{h^3}{12}\, W_{\mathrm{shell}}  \big(   {\rm C}_{y_0} \mathcal{K}_{e,s}^{\rm{lin,plate}}  \big) 
 + \,h\,W_{\mathrm{curv}}\big(  \mathcal{K}_{e,s}^{\rm{lin,plate}}  \big)   
\Big] d a- {\Pi}^{\rm lin,plate}(v,\vartheta)\,,
\end{align}
where $\mathcal{E}_{m,s}^{\rm{lin,plate}}$, $\mathcal{K}_{e,s}^{\rm{lin,plate}}$ and $\mathrm{C}_{\rm id}\,\mathcal{K}_{e,s}^{\rm{lin}}$ have the expressions from above and  
$\overline{\Pi}^{\rm lin,plate}(v,\vartheta)$ is the linearization of $\overline{\Pi}(m,\overline{Q}_{e,s})$ for a flat initial configuration. Note the automatic absence of $O(h^5)$-terms for the flat shell problem and
that the partial derivatives of the third component of $\vartheta$  survive after linearization only in the curvature part of the energy. 

Let us consider the following alternative expression of $\mathcal{E}_{m,s}^{\rm{lin,plate}}$ and $\mathrm{C}_{\rm id} \mathcal{K}_{e,s}^{\rm{lin,plate}}$  
\begin{align}\label{eq5plate}
\mathcal{E}_{m,s}^{\rm{lin,plate}}=&
\begin{footnotesize}\left( \begin{array}{c|c}
\mathcal{G}^{\rm{lin,plate}} & 0 \vspace{4pt}\\
\mathcal{T}^{\rm{lin,plate}}  & 0
\end{array} \right)\end{footnotesize} ,
\qquad\qquad \qquad
\mathrm{C}_{\rm id} \mathcal{K}_{e,s}^{\rm{lin,plate}}= \, -
\begin{footnotesize}\left( \begin{array}{c|c}
\mathcal{R}^{\rm{lin,plate}} & 0 \vspace{4pt}\\
0 & 0
\end{array} \right)\end{footnotesize}
,
\end{align}
where
\begin{align}
\mathcal{G}^{\rm{lin,plate}} &=  (e_1\,|\,e_2)^{T} ( \nabla v - \overline{A}_\vartheta(e_1\,|\,e_2) )=    \begin{footnotesize}
\begin{pmatrix}
\partial_{x_1}v_1&\partial_{x_2}v_1+\vartheta_3\\
\partial_{x_1}v_2-\vartheta_3&\partial_{x_2}v_2
\end{pmatrix}\end{footnotesize},\notag
\\
\mathcal{T}^{\rm{lin,plate}} &=  e_3^T ( \nabla v - \overline{A}_\vartheta (e_1\,|\,e_2))= \begin{footnotesize}
\begin{pmatrix}
\partial_{x_1}v_3+\vartheta_2&\partial_{x_2}v_3-\vartheta_1
\end{pmatrix}\end{footnotesize}\\
\mathcal{R}^{\rm{lin,plate}} &=   (e_1\,|\,e_2)^{T} ( e_3\times\nabla \vartheta )=[(e_1\,|\,e_2)^{T} \times e_3^T](\nabla \vartheta )=-[e_3\times (e_1\,|\,e_2)]^T(\nabla \vartheta )\notag\\&=- (e_3\times e_1\,|\,e_3\times e_2)]^T(\nabla \vartheta )=- (e_2\,|- e_1)^T\,\nabla \vartheta=\,\begin{footnotesize}\begin{pmatrix}
0&-1&0 \\
1&0&0 \\
\end{pmatrix}\end{footnotesize}\,\begin{pmatrix}
\partial_{x_1}\vartheta_1&\partial_{x_2}\vartheta_1\\
\partial_{x_1}\vartheta_2&\partial_{x_2}\vartheta_2 \\
\partial_{x_1}\vartheta_3&\partial_{x_2}\vartheta_3
\end{pmatrix}=\,\begin{footnotesize}\begin{pmatrix}
-\partial_{x_1}\vartheta_2&-\partial_{x_2}\vartheta_2\\
\partial_{x_1}\vartheta_1&\partial_{x_2}\vartheta_1  \\
\end{pmatrix}\end{footnotesize} ,\notag
\end{align}
as well as the decomposition 
\begin{align}  \label{descK0}
\mathcal{K}_{e,s}^{\rm{lin,plate}} =& \, \mathrm{C}_{\rm id}( - \mathrm{C}_{\rm id} \mathcal{K}_{e,s}^{\rm{lin,plate}}) +(0|0|e_3) \,(0|0|(\mathcal{K}_{e,s}^{\rm{lin,plate}})^T\,e_3)^T\, \, \\
=& \,\mathrm{C}_{\rm id}
\begin{footnotesize}\left( \begin{array}{c|c}
\mathcal{R}^{\rm{lin,plate}} & 0 \vspace{4pt}\\
0 & 0
\end{array} \right)\end{footnotesize} +(0|0|e_3) \,(0|0|(\mathcal{N}^{\rm{lin,plate}})^T)^T,\notag
\end{align} 
where 
\begin{equation}
\label{e5d0}
\mathcal{N}^{\rm{lin,plate}} :=  e_3^T\nabla \vartheta =(\partial_{x_1}\vartheta_3,\partial_{x_2}\vartheta_3),
\end{equation}
represents the row vector of {\it drilling bendings}. Then 
the  variational problem for the linear Cosserat plate model  is  to find a midsurface displacement vector field 
$v:\omega\subset\mathbb{R}^2\to\mathbb{R}^3$ and the micro-rotation vector field $\vartheta:\omega\subset\mathbb{R}^2\to\mathbb{R}^3$ minimizing on $\omega$:\newpage
\begin{align}\label{e89lplate}
I(v,\vartheta)\!=\!\! \int_{\omega}   \!\!
\Big[  &\underbrace{h\, \Big( \mu\,\lVert \mathrm{sym}\,     \mathcal{G}^{\rm{lin,plate}}\rVert^2 +    \mu_c\lVert \,\mathrm{skew}\,  \mathcal{G}^{\rm{lin,plate}}\rVert^2  + \,\dfrac{\lambda\,\mu}{\lambda+2\mu}\,[\mathrm{tr}  \big(  \mathcal{G}^{\rm{lin,plate}}\big)]^2\Big)}_{\textrm{in-plane deformation}}\notag\\&
+ \underbrace{h\,\dfrac{\mu+\mu_c}{2}
	\bigl\lVert \mathcal{T}^{\rm{lin,plate}} \rVert^2}_{\textrm{transverse shear}}  \\\notag&+ \underbrace{\dfrac{h^3}{12}\, \Big( \mu\,\lVert \mathrm{sym}\,    \mathcal{R}^{\rm{lin,plate}}\rVert^2 +    \mu_c\lVert \,\mathrm{skew}\,   \mathcal{R}^{\rm{lin,plate}}\rVert^2  + \,\dfrac{\lambda\,\mu}{\lambda+2\mu}\,[\mathrm{tr}  \big( \mathcal{R}^{\rm{lin,plate}}\big)]^2\Big)}_{{\rm bending-type\  terms of order} \ \ O(h^3)}  \\
&+  \underbrace{h\,\mu \,L_c^2\Big[ b_1\,\lVert \mathrm{sym} \,\mathcal{R}^{\rm{lin,plate}} \rVert^2 
	+     \Big(8\,b_3+\dfrac{b_1}{3}\Big)\;\lVert \,\mathrm{skew}\mathcal{R}^{\rm{lin,plate}}  \rVert^2 + \,\dfrac{b_2-b_1}{2}\,\big[\mathrm{tr}  \big( \mathcal{R}^{\rm{lin,plate}} \big)\,\big]^2\,\Big]}_{{\rm bending-type terms of order} \ \ O(h)}\notag
\vspace{6pt}\\
&\ \ + \underbrace{h\,\mu\, L_c^2\,\dfrac{b_1+b_2}{2}\,
 \lVert  \mathcal{N}^{\rm{lin,plate}}\rVert^2}_{\textrm{drilling bendings}}  
\Big] d a- {\Pi}^{\rm lin,plate}(v,\vartheta)\,,
\end{align}

Since $
\sym(\mathcal{G}^{\rm{lin}} )= \mathcal{G}_{\rm{Koiter}}^{\rm{lin}},$ we observe that the in-plane deformation energy coincides with the membrane part of the energy of the Koiter model. In addition, in our linear model the influence of the microstructure is  taken into account (see the terms having the coefficient $\mu_{\rm c}$) and  traverse shear is taken into account, too. The connection between the bending strain tensor in the Koiter model and the bending strain tensor in our linear model may be done in a constrained model, since the Koiter model does not consider Cosserat effects. In the flat shell model the bending-type terms appear due to both three dimensional energies, the elastic strain energy and the curvature energy. The two-dimensional part resulting after dimensional reduction from the curvature energy is capable to capture the drilling bendings, too.

\section{A comparison  with  the general 6-parameter shell linear model}

There exist many linear shell model derived in the framework of Cosserat theory. Most of them are included as particular cases in the linearised model of the 6-parameter theory of shells \cite{Pietraszkiewicz-book04,Pietraszkiewicz09,Pietraszkiewicz10,Pietraszkiewicz14,Eremeyev06}.	In the resultant 6-parameter theory of shells, the strain energy density for isotropic shells has been presented in various forms. The comparison of our nonlinear shell model and the  general 6-parameter nonlinear shell  models was done in \cite{GhibaNeffPartI}. Since the models are quadratic in all the involved strain and curvature tensors, all the remarks from this comparison are still valid in the linear models. We recall here the concluding remarks of the comparison, but in terms of the strain of the linear model. 
	
	The simplest expression $W_{\rm P}(\mathcal{E}_{m,s}^{\rm lin} ,\mathcal{K}_{e,s}^{\rm lin})$ has been proposed in the papers \cite{Pietraszkiewicz-book04,Pietraszkiewicz10} in the form
\begin{align}\label{56}
2\,W_{\rm P}(\mathcal{E}_{m,s}^{\rm lin} ,\mathcal{K}_{e,s}^{\rm lin})=& \,C\big[\,\nu \,(\mathrm{tr}\, \mathcal{E}_{m,s}^{{\rm lin},\parallel})^2 +(1-\nu)\, \mathrm{tr}((\mathcal{E}_{m,s}^{{\rm lin},\parallel} )^T \mathcal{E}_{m,s}^{{\rm lin},\parallel} )\big]  + \alpha_{s\,}C(1-\nu) \, \lVert  \mathcal{E}_{m,s}^{{\rm lin},T}  {n}_0\rVert^2\notag \\
&+\,D \big[\,\nu\,(\mathrm{tr}\, \mathcal{K}_{e,s}^{{\rm lin},\parallel})^2 + (1-\nu)\, \mathrm{tr}((\mathcal{K}_{e,s}^{{\rm lin},\parallel} )^T \mathcal{K}_{e,s}^{{\rm lin},\parallel} )\big]  + \alpha_{t\,}D(1-\nu) \,     \lVert \mathcal{K}_{e,s}^{{\rm lin},T}  {n}_0\rVert^2,
\end{align}
where the decompositions  of $\mathcal{E}_{m,s}^{\rm lin}$ and $\mathcal{K}_{e,s}^{\rm lin}$ into two orthogonal directions (in the tangential plane and in the normal direction)\footnote{	Here, we have used that, since
	${\rm A}_{y_0}\,=\,\id_3-n_0\otimes n_0$, for all $X\in \mathbb{R}^{3\times 3}$ it holds
$
	\lVert X^{\perp}\rVert^2=\lVert X^T\,n_0\rVert^2.
$}  are considered, i.e.,
\begin{align}\label{54,1}
\mathcal{E}_{m,s}^{{\rm lin},\parallel}&={\rm A}_{y_0}\mathcal{E}_{m,s}^{\rm lin}=(\id_3-n_0\otimes n_0)\mathcal{E}_{m,s}^{\rm lin},\qquad\qquad\quad\ \mathcal{K}_{e,s}^{{\rm lin},\parallel}={\rm A}_{y_0}\,\mathcal{K}_{e,s}^{\rm lin}=(\id_3-n_0\otimes n_0) \,\mathcal{K}_{e,s}^{\rm lin},\\
\mathcal{E}_{m,s}^{{\rm lin},\perp}&=(\id_3-{\rm A}_{y_0})\,\mathcal{E}_{m,s}^{\rm lin}=n_0\otimes n_0\,\mathcal{E}_{m,s}^{\rm lin},\qquad\qquad\quad \mathcal{K}_{e,s}^{{\rm lin},\perp}=(\id_3-{\rm A}_{y_0})\,\mathcal{K}_{e,s}^{\rm lin}=n_0\otimes n_0\,\mathcal{K}_{e,s}^{\rm lin}.\notag
\end{align}

The constitutive coefficient $C=\frac{E\,h}{1-\nu^2}\,$ is the stretching (in-plane) stiffness of the shell, $D=\frac{E\,h^3}{12(1-\nu^2)}\,$ is the bending stiffness, and $\alpha_s\,$, $\alpha_t$ are two shear correction factors. Also,    $ E=\frac{\mu\,(3\,\lambda+2\,\mu)}{\lambda+\mu}$ and $ \nu=\frac{\lambda}{2\,(\lambda+\mu)}
$ denote the Young modulus and  Poisson ratio of the isotropic and homogeneous material. In the numerical treatment of non-linear shell problems, the values of the shear correction factors have been set to  $\alpha_s=5/6$, $\alpha_t=7/10$ in \cite{Pietraszkiewicz10}. The value $\alpha_s=5/6$ is a classical suggestion, which has been previously deduced analytically by Reissner in the case of plates \cite{Reissner45,Naghdi72}.
Also, the value $\,\alpha_t=7/10\,$ was proposed earlier in \cite[see p.78]{Pietraszkiewicz-book79} and has been suggested in the work \cite{Pietraszkiewicz79}.
However, the discussion concerning the possible values of  shear correction factors for shells is long and controversial in the literature \cite{Naghdi72,Naghdi-Rubin95}.

	The coefficients in \eqref{56} are expressed in terms of the Lam\'e constants of the material $\lambda$ and $\mu$ now  by the relations
\begin{equation*}
C \,\nu\,=\frac{4\,\mu\,(\lambda+\mu)}{\lambda+2\,\mu}\,h\,,\qquad\quad  C(1\!-\!\nu)=2\,\mu\, h,\qquad\quad
D \,\nu\,=\frac{4\,\mu\,(\lambda+\mu)}{\lambda+2\,\mu}\,\frac{h^3}{12}\,,\qquad\quad 
D(1\!-\!\nu)=\mu\,\dfrac{ h^3}{6}\,.
\end{equation*}

	In \cite{Eremeyev06}, Eremeyev and Pietraszkiewicz have proposed a more general form of the strain energy density, namely
\begin{align}\label{58}
2\, W_{\rm EP}(\mathcal{E}_{m,s}^{\rm lin} ,\mathcal{K}_{e,s}^{\rm lin})=& \, \alpha_1\,\big(\mathrm{tr} \, \mathcal{E}_{m,s}^{{\rm lin},\parallel}\big)^2 +\alpha_2 \, \mathrm{tr} \big(\mathcal{E}_{m,s}^{{\rm lin},\parallel}\big)^2    + \alpha_3 \,\mathrm{tr}\big((\mathcal{E}_{m,s}^{{\rm lin},\parallel})^T \mathcal{E}_{m,s}^{{\rm lin},\parallel} \big)  + \alpha_4  \,   \lVert  \mathcal{E}_{m,s}^{{\rm lin},T}  {n}_0\rVert^2 \notag\\
& + \beta_1\,\big(\mathrm{tr}\,  \mathcal{K}_{e,s}^{{\rm lin},\parallel}\big)^2 +\beta_2\,  \mathrm{tr} \big(\mathcal{K}_{e,s}^{{\rm lin},\parallel}\big)^2    + \beta_3\, \mathrm{tr}\big((\mathcal{K}_{e,s}^{{\rm lin},\parallel})^T \mathcal{K}_{e,s}^{{\rm lin},\parallel} \big)  + \beta_4 \,    \lVert  \mathcal{K}_{e,s}^{{\rm lin},T}  {n}_0\rVert^2.
\end{align}
Already, note the absence of coupling terms involving $\mathcal{K}_{e,s}^{{\rm lin},\parallel}$ and $\mathcal{E}_{m,s}^{{\rm lin},\parallel}$. 

The eight coefficients $\alpha_k$, $\beta_k$ ($k=1,2,3,4$) can depend in general on the structure curvature tensor  \break $\mathcal{K}^0=Q_0 ( \, \text{axl}(Q_{0}^T\partial_{x_1} Q_{0})  \, | \,  \text{axl}(Q_{0}^T\partial_{x_2} Q_{0}) \, |\, \,0 \, )(\nabla y_0\,|\,n_0)^{-1}$ of the reference configuration, where $Q_0={\rm polar}(\nabla y_0\,|\,n_0)$. 

We conclude
\begin{remark}
	\begin{itemize}\item[]
		\item[i)]  By comparing our $W_{\rm our}\big(  \mathcal{E}_{m,s}^{{\rm lin}},\mathcal{K}_{e,s}^{{\rm lin}}  \big)$ with $W_{\rm EP}\big(  \mathcal{E}_{m,s}^{{\rm lin}},\mathcal{K}_{e,s}^{{\rm lin}}  \big)$ we deduce the following identification of the constitutive coefficients $\alpha_1\,,...,\alpha_4,\beta_1\,,...,\beta_4$
		\begin{align}
		\alpha_1&=\Big(h+{\rm K}\,\dfrac{h^3}{12}\Big)\,\dfrac{2\mu\lambda}{2\mu+\lambda}\,,\qquad \qquad\alpha_2=\Big(h+{\rm K}\,\dfrac{h^3}{12}\Big)\,(\mu-\mu_{\rm c}),\notag\\ \alpha_3&=\Big(h+{\rm K}\,\dfrac{h^3}{12}\Big)\,(\mu+\mu_{\rm c}),\qquad \qquad\alpha_4=\Big(h+{\rm K}\,\dfrac{h^3}{12}\Big) (\mu+\mu_{\rm c}),\\
		\beta_1&=2\,\Big(h-{\rm K}\,\dfrac{h^3}{12}\Big)\,\mu\, {L}_{\rm c}^2  \frac{12\,b_3-b_1}{3},\quad 
		\beta_2=\Big(h-{\rm K}\,\dfrac{h^3}{12}\Big)\,\mu\, {L}_{\rm c}^2  (b_1-b_2),\notag\\ \beta_3&=\Big(h-{\rm K}\,\dfrac{h^3}{12}\Big)\,\mu\, {L}_{\rm c}^2  (b_1+b_2), \qquad \  \beta_4=\Big(h-{\rm K}\,\dfrac{h^3}{12}\Big)\,\mu\, {L}_{\rm c}^2  (b_1+b_2).\notag
		\end{align} 
		\item[ii)] We observe that 
		$
		\mu_{\rm c}^{\mathrm{drill}}\,:=\,\alpha_3-\alpha_2\,=\,2\Big(h+{\rm K}\,\dfrac{h^3}{12}\Big)\,\mu_{\rm c}\,,
	$
		which means that the in-plane rotational couple modulus $\,\mu_{\rm c}^{\mathrm{drill}}\,$ of the Cosserat shell model is determined by the Cosserat couple modulus $\,\mu_{\rm c}\,$ of the 3D Cosserat material.

		\item[iii)] In our shell model, the constitutive coefficients are those from the three-dimensional formulation, while the influence of the curved initial shell configuration appears explicitly  in the expression of the coefficients  of the energies for the reduced two-dimensional variational problem.
		
		\item[iv)] The major difference between our model and the previously considered   general 6-parameter shell model is that we include terms up to order $O(h^5)$ and that, even in the case of a simplified theory of order $O(h^3)$, additional mixed terms like the membrane--bending part $\,W_{\mathrm{memb,bend}}\big(  \mathcal{E}_{m,s}^{{\rm lin}} ,\,  \mathcal{K}_{e,s}^{{\rm lin}} \big) \,$ and $W_{\mathrm{curv}}\big(  \mathcal{K}_{e,s} ^{{\rm lin}}  {\rm B}_{y_0} \,  \big) $ are included, which are otherwise difficult to guess.
		
		\item[v)] Beside the fact that mixed membrane--bending terms are included, the constitutive coefficients in our shell model depend on both the  Gau\ss \ curvature ${\rm K}$ and the mean curvature ${\rm H}$, see item i) and compare to \eqref{e90}. Moreover, due to the bilinearity of the density energy, if the final form of the energy density is expressed as a quadratic form  in terms of $\big(  \mathcal{E}_{m,s}^{{\rm lin}} ,\,  \mathcal{K}_{e,s}^{{\rm lin}} \big) $, as in the $W_{\rm EP}\big(  \mathcal{E}_{m,s}^{{\rm lin}},\mathcal{K}_{e,s}^{{\rm lin}}  \big)$, then we remark that the dependence on the mean curvature is not only the effect of the presence of the mixed terms, due to the Cayley-Hamilton equation $
		{\rm B}_{y_0}^2=2\,{\rm H}\, {\rm B}_{y_0}-{\rm K}\, {\rm A}_{y_0}
		$. See for instance the energy term $W_{\mathrm{curv}}\big(  \mathcal{K}_{e,s}^{{\rm lin}}   {\rm B}_{y_0}^2 \,  \big) $ or even  $
		W_{\mathrm{mp}} \big((  \mathcal{E}_{m,s}^{{\rm lin}} \, {\rm B}_{y_0} +  {\rm C}_{y_0} \mathcal{K}_{e,s}^{{\rm lin}} )   {\rm B}_{y_0} \,\big)$ from \eqref{e89l}.
	\end{itemize}
\end{remark}

\bigskip

\begin{footnotesize}
	\noindent{\bf Acknowledgements:}   This research has been funded by the Deutsche Forschungsgemeinschaft (DFG, German Research Foundation) -- Project no. 415894848: NE 902/8-1 (P. Neff) and
	BI 1965/2-1 (M. B\^irsan)). No funding source is specified for the other author.
		
	\medskip

	

	

	\bibliographystyle{plain} 
	
	\addcontentsline{toc}{section}{References}

	\appendix

	\section{The classical linear (first) Koiter membrane-bending model }\setcounter{equation}{0}
	According to \cite[page 344]{Ciarlet00}, \cite[page 154, ]{ciarlet2005introduction}  in the linear (first) Koiter model, the variational problem is to find a midsurface displacement vector field
	$v:\omega\subset\mathbb{R}^2\to\mathbb{R}^3$  minimizing:
	\begin{align}\label{Ap7matrix1}
	\dd\int_\omega &\bigg\{h\bigg(
	\mu\rVert    [\nabla\Theta]^{-T} (\mathcal{G}_{\rm{Koiter}}^{\rm{lin}})^\flat [\nabla\Theta]^{-1}\rVert^2  +\dfrac{\,\lambda\,\mu}{\lambda+2\,\mu} \, \mathrm{tr} \Big[ [\nabla\Theta]^{-T} (\mathcal{G}_{\rm{Koiter}}^{\rm{lin}})^\flat  [\nabla\Theta]^{-1}\Big]^2\bigg) \vspace{6pt} \\
	&+\dd\frac{h^3}{12}\bigg(
	\mu\rVert    [\nabla\Theta]^{-T} \big(\mathcal{R}_{\rm{Koiter}}^{\rm{lin}}\big)^\flat  [\nabla\Theta]^{-1}\rVert^2 +\dfrac{\,\lambda\,\mu}{\lambda+2\,\mu} \, \mathrm{tr} \Big[ [\nabla\Theta]^{-T} \big(\mathcal{R}_{\rm{Koiter}}^{\rm{lin}}\big)^\flat   [\nabla\Theta]^{-1}\Big]^2\bigg)\bigg\}\,{\rm det}(\nabla y_0|n_0)\, {\rm d}a,\notag
	\end{align}
	where $(\mathcal{G}_{\rm{Koiter}}^{\rm{lin}})^\flat $ and $\big(\mathcal{R}_{\rm{Koiter}}^{\rm{lin}}\big)^\flat$ are the lifted quantities of the strain measures \cite{Ciarlet00}   given by
	\begin{equation}
	\label{equ11}
	\mathcal{G}_{\rm{Koiter}}^{\rm{lin}} \,\,:=\frac{1}{2}\big[{\rm I}_m - {\rm I}_{y_0}\big]^{\rm{lin}}= \,\,\frac12\;\big[  (\nabla y_0)^{T}(\nabla v) +  (\nabla v)^T(\nabla y_0)\big]
	= \sym\big[ (\nabla y_0)^{T}(\nabla v)\big]
	\;
	\end{equation}
	and
	\begin{align}
	\label{equ12}
	\mathcal{R}_{\rm{Koiter}}^{\rm{lin}} \,\,:&= \,\, \big[{\rm II}_m - {\rm II}_{y_0}\big]^{\rm{lin}} \,= 	 \Big( \bigl\langle n_0 ,  \partial_{x_\alpha  x_\beta}\,v- \dd\sum_{\gamma=1,2}\Gamma^\gamma_{\alpha \beta}\,\partial_{x_\gamma}\,v\bigr\rangle a^\alpha\,\Big)_{\alpha\beta}\in \mathbb{R}^{2\times 2},
	\end{align}
	The expression of $\mathcal{R}_{\rm{Koiter}}^{\rm{lin}}$ involves the Christoffel  symbols
	$
	\Gamma^\gamma_{\alpha\beta}
	$  on the surface given by
	$
	\Gamma^\gamma_{\alpha\beta}=\bigl\langle a^\gamma, \partial_{x_\alpha} a_\beta\bigr\rangle=-\bigl\langle \partial_{x_\alpha} a^\gamma,  a_\beta\bigr\rangle=\Gamma^\gamma_{\beta\alpha}.
$ Note that,
	using $m=y_0+v$ and
$
	(\nabla m)^T\nabla m=(\nabla y_0)^T\nabla y_0+(\nabla y_0)^T\nabla v+(\nabla v)^T\nabla y_0+\text{h.o.t}\in \mathbb{R}^{2\times 2},
	$
	the linear approximation of the difference  $\frac{1}{2}\big[{\rm I}_m - {\rm I}_{y_0}\big]^{\rm{lin}}$ appearing in the Koiter model  can easily  be obtained \cite[page 92]{Ciarlet00}, the linear approximation of the difference  $\big[{\rm II}_m - {\rm II}_{y_0}\big]^{\rm{lin}}$ needs some more insights from differential geometry \cite[page 95]{Ciarlet00} and it is based on 
	{\it formulas of Gau\ss}  \ $\partial_{x_\alpha} a_\beta=\sum_{\gamma=1,2}\Gamma_{\alpha\beta}^\gamma \,a_\gamma+b_{\alpha\beta}a_3$ and  $  
	\partial_{x_\beta}a^\alpha = - \sum_{\gamma=1,2}\Gamma^\alpha_{\gamma\beta}\,a^\gamma + b^\alpha_\beta\, n_0 
$
	and the formulas of  {\it Weingarten}
$\partial_{x_\alpha}     a_3=\partial_{x_\alpha}   a^3= -\sum_{\beta=1,2}b_{\alpha\beta}\, a^\beta=-\sum_{\beta=1,2} b^\gamma_\beta\, a_\gamma\;
	$
	together with  the relations \cite[page 76]{Ciarlet00}
	$ b_{\alpha\beta}(m)=-\bigl\langle\partial _\alpha a_3(m), a_\beta(m)\bigr\rangle=\bigl\langle\partial _\alpha a_\beta(m), a_3(m)\bigr\rangle=b_{\beta\alpha}(m),
$
	where  $b_{\alpha\beta}(m)$ are the components of the second fundamental form corresponding to the map $m$, $b_{\alpha}^\beta(m)$ are the components of the matrix associated to the Weingarten map (shape operator), and on the following  linear approximation
	$
	n=\,n_0+\frac{1}{\sqrt{\det ((\nabla y_0)^T\nabla y_0)}}\left(\partial_{x_1} y_0\times \partial_{x_2} v+\partial_{x_1} v\times \partial_{x_2} y_0+\text{h.o.t}\right) -\tr(((\nabla y_0)^T\nabla y_0)^{-1}\, \sym ((\nabla y_0)^T\nabla v) )\,n_0.
$
	
	We  note that other alternative forms of the change of metric tensor and  the change of curvature tensor  in \cite[Page 181]{Ciarlet00} are
	\begin{equation}\label{formK}
	\mathcal{G}_{\rm{Koiter}}^{\rm{lin}} =  \Big( \frac{1}{2}(\partial_\beta v_\alpha+\partial_\alpha v_\beta)-\sum_{\gamma=1,2}\Gamma_{\alpha\beta}^\gamma v_\gamma-b_{\alpha\beta}v_3 \Big)_{\alpha\beta}\in \mathbb{R}^{2\times 2},
	\end{equation}
	and \begin{align}\label{formR}
	\mathcal{R}_{\rm{Koiter}}^{\rm{lin}} = & \Big( \partial_{x_\alpha x_\beta}v_3-\sum_{\gamma=1,2}\Gamma_{\alpha\beta}^\gamma \partial_{x_\gamma}v_3-\sum_{\gamma=1,2}b_\alpha^\gamma b_{\gamma\beta}v_3+\sum_{\gamma=1,2}b_{\alpha}^\gamma(\partial_{x_\beta}v_\gamma-\sum_{\tau=1,2}\Gamma_{\beta\gamma}^\tau v_\tau)\\&+\sum_{\gamma=1,2}b_{\beta}^\gamma(\partial_{x_\alpha}v_\gamma-\sum_{\tau=1,2}\Gamma_{\alpha\tau}^\gamma v_\gamma)
	+\sum_{\tau=1,2}(\partial _{x_\alpha}b_\beta^\tau+\sum_{\gamma=1,2}\Gamma_{\alpha\gamma}^\tau b_\beta^\gamma-\sum_{\gamma=1,2}\Gamma_{\alpha\beta}^\gamma b_\gamma^\tau)v_\tau\Big)_{\alpha\beta}\in \mathbb{R}^{2\times 2},\notag
	\end{align}  respectively. Actually, on one hand, the last form of the curvature tensor will be considered when the admissible set of solutions of the variational problem will be defined.  On the other hand, as noticed in \cite[Page 175]{Ciarlet2Diff-Geo2005} by considering the form \eqref{equ12} of the change of metric tensor,   we can impose substantially weaker   regularity assumptions  on the mapping $y_0$. For the linear (first) Koiter model the existence results are given in  \cite[Theorem 7.1.-1 and Theorem 7.1.-2]{Ciarlet00}, see also \cite{blouza1999existence}.

	While the relation between $\mathcal{G}^{\rm{lin}}$ and $ \;\mathcal{G}_{\rm{Koiter}}^{\rm{lin}} $ holds  in the general case, we are able to find  a simple explicit relation between  $\; \mathcal{R}^{\rm{lin}}\; $ and $\; \mathcal{R}_{\rm{Koiter}}^{\rm{lin}} $  only in the case of the constrained Cosserat-shell model. This is not surprising, since only symmetric stress tensors are taken into account in the classical linear Koiter model, i.e., the internal strain energy does not depend on the skew-symmetric part of the considered strain measures (since it is work conjugate to the skew-symmetric part of the stress tensor). In addition,   the linear Koiter model does not consider  extra degrees of freedom.
	In \cite{GhibaNeffPartVI}  we will discuss the choice of the deformation measures in  shell models. We  will see that the classical strain measure  $\; \mathcal{R}_{\rm{Koiter}}^{\rm{lin}} $ (the classical bending strain measure, also known as the change of curvature tensor) does not represent the unique choice and that some other modified expressions of the classical bending tensor   may be more suitable in the modelling of a shell.
	
	\end{footnotesize}

\end{document}